\numberwithin{equation}{section}
\theoremstyle{plain}
\newtheorem{theorem}{Theorem}
\newtheorem{lemma}[theorem]{Lemma}
\newtheorem{corollary}[theorem]{Corollary}
\newtheorem{proposition}[theorem]{Proposition}
\newtheorem*{theorem*}{Theorem}
\newtheorem*{conjecture*}{Conjecture}
\theoremstyle{definition}
\newtheorem{remark}[theorem]{Remark}
\newtheorem{example}[theorem]{Example}
\newtheorem{definition}[theorem]{Definition}
\newcommand\DistTo{\xrightarrow{
   \,\smash{\raisebox{-0.65ex}{\ensuremath{\scriptstyle\sim}}}\,}}
\newcommand{\CC}{{\mathbb{C}}}
\newcommand{\ZZ}{{\mathbb{Z}}}
\newcommand{\NN}{{\mathbb{N}}}
\newcommand{\Jac}{\mathrm{Jac}}
\newcommand{\id}{\mathrm{id}}
\newcommand{\bx}{{\bf x}}
\newcommand{\by}{{\bf y}}
\newcommand{\bz}{{\bf z}}
\newcommand{\rmH}{{{\rm H}}}
\newcommand{\ccHH}{{\mathsf{HH}}}
\begin{document}

\title[Non-invertible singularities and their Landau-Ginzburg orbifolds]{Non-invertible quasihomogeneous singularities and their Landau-Ginzburg orbifolds}  
\date{\today}
\author{Anton Rarovskii}
\address{\newline Faculty of Mathematics, National Research University Higher School of Economics, Usacheva str., 6, 119048 Moscow, Russian Federation, and \newline
Skolkovo Institute of Science and Technology, Nobelya str., 3, 121205 Moscow, Russian Federation}
\email{aararovskiy@edu.hse.ru}
\begin{abstract}
     According to the classification of quasihomogeneous singularities, any polynomial $f$ defining such singularity has a decomposition $f = f_\kappa + f_{add}$. The polynomial $f_\kappa$ is of the certain form while $f_{add}$ is only restricted by the condition that the singularity of $f$ should be isolated. The polynomial $f_{add}$ is zero if and only if $f$ is invertible, and in the non-invertible case $f_{add}$ is arbitrarily complicated. In this paper we investigate all possible polynomials $f_{add}$ for a given non-invertible $f$. For a given $f_\kappa$ we introduce a specific small collection of monomials that build up $f_{add}$ such that the polynomial $f = f_\kappa + f_{add}$ defines an isolated quasihomogeneous singularity. If $(f,\ZZ/2\ZZ)$ is Landau-Ginzburg orbifold with such non-invertible polynomial $f$, we provide the quasihomogeneous polynomial $\bar{f}$ such that the orbifold equivalence $(f,\ZZ/2\ZZ) \sim (\bar{f}, \{\id\})$ holds. We also give an explicit isomorphism between the corresponding Frobenius algebras.
\end{abstract}
\maketitle
\setcounter{tocdepth}{1}
\section{Introduction}
Let $f$ be a quasihomogeneous polynomial defining an isolated singularity at the origin (in the remainder of this
article, we call such $f$
\textit{non-degenerate}) and let G be a group of its symmetries.  
Then the pair $(f,G)$ is called \emph{Landau-Ginzburg orbifold}. The studying of these objects was initiated by physicists in early 90s (cf. \cite{IV}, \cite{V}, \cite{Witt}) for the purposes of mathematical physics, and was followed by mathematicians. If $f$ is a representative of a rather special class of polynomials which are called \emph{invertible}, then there are many well-known results in singularity theory, algebraic geometry and mirror symmetry for Landau-Ginzburg orbifold associated with $f$ (cf. \cite{BT1}, \cite{BTW2}, \cite{EGZ}, \cite{BI}, \cite{FJJS}, \cite{KPA}). In particular, for diagonal groups $G$ there is a well-defined dual pair $(\Tilde{f},\Tilde{G})$ which is called \emph{BHK-dual} (see \cite{BHu}, \cite{BHe}), and mirror isomorphism between their Frobenius algebras (see \cite{Kr}). 

\subsection*{Non-invertible polynomials}
In this paper we will consider another class of 
polynomials, which are non-degenerate but non-invertible. Up to now there are quite few results in singularity theory and mirror symmetry related to such type of polynomials $f$ and corresponded Landau-Ginzburg orbifolds $(f,G)$ (cf. \cite{BT2}, \cite{ET2}). From \cite{HK} we have that any non-degenerate and non-invertible polynomial has a form $f = f_\kappa + f_{add}$, where $f_\kappa$ is constructed by a graph $\Gamma_f = \sqcup \Gamma_{f_i}$, where $\Gamma_{f_i}$ is a graph which we call \emph{loop with branches} graph (see Figure 1 in text), and $f_{add} = f - f_\kappa$. 

For example, consider two following polynomials $f_1 = x_1^{a_1}x_2 + x_2^{a_2}x_3 + x_3^{a_3}$ and
$f_2 = x_1^{a_1}x_2 + x_2^{a_2}x_3 + x_3^{a_3}x_2 + \varepsilon x_1^{b_1}x_3^{b_3}$. For non-degeneracy of polynomial $f_2$ we need $\varepsilon \in \CC^*$ and $b_1q_1+b_3q_3=1$, where $(q_1, q_2, q_3)$ is the set of weights of $f$ (see Remark 2). In this case $f_1$ is an example of invertible polynomial and $f_2$ is an example of non-invertible polynomial. The corresponding graphs look as follows:
\[\begin{tikzcd}[column sep=small]
	& {\bullet_1} &&&& {\bullet_1} \\
	\\
	{\bullet_2} && {\bullet_3} && {\bullet_2} && {\bullet_3}
	\arrow[from=1-2, to=3-1]
	\arrow[from=1-6, to=3-5]
	\arrow[from=3-1, to=3-3]
	\arrow[curve={height=-12pt}, from=3-5, to=3-7]
	\arrow[curve={height=-12pt}, from=3-7, to=3-5]
\end{tikzcd}\]
 In particular, $(f_1)_\kappa = f_1$, $(f_{1})_{add} = 0$ and $(f_2)_\kappa = x_1^{a_1}x_2 + x_2^{a_2}x_3 + x_3^{a_3}x_2$, $(f_2)_{add} = \varepsilon x_1^{b_1}x_3^{b_3}$.

Suppose now that we start from the quasihomogeneous $f_\kappa$ corresponding to some graph of a quasihomogeneous singularity. If $f_\kappa$ is non-invertible, then it is degenerate (i.e. defines a non-isolated singularity at the origin). In our paper we introduce Theorem \ref{kappa-J} which provides admissible collection of monomials entering $f_{add}$ such that $f = f_\kappa + f_{add}$ is non-degenerate. Note that adding such $f_{add}$ given by the Theorem \ref{kappa-J} is not the unique method to construct a non-degenerate polynomial from $f_\kappa$. But our method is useful because it allows to control which monomials the polynomial $f_{add}$ consists of and on which variables it depends. We also introduce Theorem \ref{c} that provides the method of construction of $f_{add}$ which employs rather small number of monomials. In particular, $f_{add}$ depends only on variables with indices such that the arrow from corresponding vertex ends in vertex with index on the loop. In the second part we use such constructions to research corresponding Landau-Ginzburg orbifolds.
\subsection*{Orbifold equivalence}
The second part of this paper is devoted to orbifold equivalence $(f,G) \sim (\bar{f}, \{\id\})$ between Landau-Ginzburg orbifolds. It can be useful for the investigation of Landau-Ginzburg orbifolds with non-trivial group $G$. Orbifold equivalence could be roughly understood as the equivalence $\mathrm{MF}(\bar{f}) \cong \mathrm{MF}_G(f)$ of categories of $G$-equivariant matrix factorizations. In this case there is an isomorphism between Hochschild cohomology of the category $\mathrm{MF}_G(f)$ and an algebra $\Jac(\bar{f})$.

In our work we use the theorem introduced in \cite{BP}, \cite{Io} (Theorem \ref{equiv} in text), which provides the method to construct orbifold equivalence $(f,G) \sim (\bar{f}, \{\id\})$ by crepant resolution of $\CC^N/G$. In particular, we obtain the orbifold equivalence between pairs with polynomials that are non-invertible. We use the explicit form of polynomials with a \say{loop with branches} graph to construct a new polynomial corresponding to the same graph but with one additional edge (Theorem \ref{main} in text). We also use Shklyarov's \cite{Sh} techniques to calculate structure constants of $\ccHH^*(\mathrm{MF}_G(f))$. It helps us to find an explicit isomorphism between their Frobenius algebras (Proposition \ref{HHJ} in text). These results should help to research Frobenius structures and mirror symmetry for $(f, G)$ via the classical singularity structures for $\bar{f}$.

\subsection*{Acknowledgements}
I am grateful to my advisor Alexey Basalaev for his guidance and inspiration in the process of writing, useful
discussions at various stages of the preparation of this paper and comments on its draft. I am also grateful to the anonymous referee for their comments, which significantly enhanced the work. The work was supported by the Theoretical Physics and Mathematics Advancement Foundation «BASIS». 

\section{Preliminaries}
 Consider $\CC[\bx] := \mathbb{C}[x_1,x_2, \dots, x_N]$ a ring of polynomials with complex coefficients. We call a polynomial $f\in \CC[\bx]$ \emph{non-degenerate} if $f$ defines an isolated singularity at the origin, i.e. the system of equations $\{\frac{\partial f}{\partial x_1} = \frac{\partial f}{\partial x_2} = \dots = \frac{\partial f}{\partial x_N}\ = 0\}$ has the unique solution at $0$. Moreover, throughout this paper we assume that $f$ does not contain the summands of the type $x_ix_j$. 
 \begin{definition}
The polynomial $f\in \CC[\bx]$ is called \emph{quasihomogeneous} with a set of weights 
$(v_1,...,v_N,d)\in\ZZ_{\geq 0}^{N+1}$ if the equality 
\begin{align*}
f(\lambda^{v_1}x_1,\lambda^{v_2}x_2,...,\lambda^{v_N} x_N)=\lambda^d f(x_1,x_2,...x_N)
\end{align*}
is true for any $\lambda\in \CC^*$. 
\end{definition}
\begin{remark}
    In Sections 6 and 7 and we use the reduced system of weights $(q_1,q_2,...q_N)$, which can be obtained as $(q_1,q_2,...q_N) := (v_1/d, \dots, v_N/d)$. Moreover, $q_i \leq \frac{1}{2}$ (see \cite{HK}, Theorem 3.7).
\end{remark}
 Now let $f\in \CC[\bx]$ be a non-degenerate quasihomogeneous polynomial.

\begin{definition}
\emph{Jacobian algebra} of polynomial \textbf{$f$} is the quotient ring of the ring of polynomials $\CC[\bx]$ on the ideal generated by partial derivatives of $f$:
\begin{align*}
\mathrm{Jac}(f) := \mathbb{C}[x_1,x_2, \dots, x_N]/_{(\frac{\partial f}{\partial x_1}, \frac{\partial f}{\partial x_2}, \dots, \frac{\partial f}{\partial x_N})}
\end{align*}
\end{definition}
The dimension of a Jacobian algebra is called \emph{Milnor number} and is denoted by $\mu_f$, and we have $\mu_f < \infty$ if and only if $f$ is non-degenerate (see \cite{AGV}).


\begin{definition}
A non-degenerate quasihomogeneous polynomial $f\in\mathbb{C}[\mathbf{x}]$ is called \emph{invertible} if the following conditions are satisfied:
\begin{itemize}
\item 
The number of variables is the same as the number of monomials in $f$: 
\begin{align*}
f(x_1,\dots ,x_N)=\sum_{i=1}^N c_i\prod_{j=1}^n x_j^{E_{ij}}
\end{align*}
For some $c_i\in\mathbb{C}^\ast$ and integer non-negative 
$E_{ij}$ for $i,j=1,\dots, N$.
\item
The matrix $(E_{ij})$ is invertible over $\mathbb{Q}$.
\end{itemize}
\end{definition}
If non-degenerate quasihomogeneous $f$ does not satisfy the conditions above, the corresponding polynomial is called non-invertible, and in this work we are interested in this case.
\begin{definition}
The group of \emph{maximal diagonal symmetries} of $f$ is the group:
\begin{align*}
    G_f = \{ (g_1, g_2, \dots, g_N) \in (\mathbb{C}^*)^N \:|\: f(g_1x_1, g_2x_2, \dots, g_Nx_N ) = \\ f(x_1, x_2, \dots, x_N) \}
\end{align*}
\end{definition}
Any subgroup $G\subseteq G_f$ is called a group of \emph{diagonal symmetries} (or just a group of \emph{symmetries}). With each element $g\in G$ we can associate an algebra $\mathrm{Jac}(f^g)$ of the polynomial $f^g = f|_{Fix(g)}$ where $Fix(g) = \{x\in\mathbb{C}^N\:|\:g\cdot x= x \}$. In addition, if $Fix(g) = \{0\}$, we put $f^g := 1$.

\begin{proposition}[\cite{ET1}, Prop. 5]
    If the polynomial $f$ is non-degenerate, then for every $g\in G$ with non-trivial fixed locus, the polynomial $f^g$ is also non-degenerate.
\end{proposition}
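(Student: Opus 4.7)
\bigskip

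\noindent\textbf{Proof proposal.} Since $g=(g_1,\dots,g_N)$ is a diagonal symmetry, its fixed locus is a coordinate subspace: setting $I:=\{i:g_i\neq 1\}$ and $J:=\{1,\dots,N\}\setminus I$, we have $\Fix(g)=\{x:x_i=0\text{ for all }i\in I\}$, so $f^g$ is a polynomial in the variables $\{x_j\}_{j\in J}$. It is manifestly quasihomogeneous, so the only point is to show that it defines an isolated singularity at the origin of $\Fix(g)$. The plan is: reduce the system of partial derivatives of $f^g$ to the system for $f$ restricted to $\Fix(g)$, and then invoke non-degeneracy of $f$.

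First I would extract the semi-invariance of partial derivatives. Differentiating $f(g\cdot x)=f(x)$ with respect to $x_k$ yields
\begin{equation*}
\frac{\partial f}{\partial x_k}(g\cdot x)=g_k^{-1}\,\frac{\partial f}{\partial x_k}(x),
\end{equation*}
so each monomial $c_\alpha x^\alpha$ appearing in $\partial f/\partial x_k$ satisfies $g^\alpha=g_k^{-1}$. For $j\in J$ (so $g_j=1$) and any multi-index $\alpha$ supported in $J$, one has $g^\alpha=\prod_{j\in J}g_j^{\alpha_j}=1$; hence such monomials are compatible with the constraint $g^\alpha=g_j^{-1}=1$. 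In contrast, for $i\in I$ (so $g_i\neq 1$), any monomial of $\partial f/\partial x_i$ supported in $J$ would force $g_i^{-1}=1$, a contradiction. Therefore
\begin{equation*}
\frac{\partial f}{\partial x_i}\bigg|_{\Fix(g)}=0\quad\text{for every }i\in I.
\end{equation*}

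Now suppose $(a_j)_{j\in J}\in\CC^{|J|}$ solves $\partial f^g/\partial x_j=0$ for all $j\in J$. Extend it to $a\in\CC^N$ by setting $a_i=0$ for $i\in I$; then $a\in\Fix(g)$. For $j\in J$, the restriction commutes with the derivative, so $\partial f/\partial x_j(a)=\partial f^g/\partial x_j(a_J)=0$, and for $i\in I$ the identity established above gives $\partial f/\partial x_i(a)=0$ as well. Hence $a$ is a critical point of the non-degenerate polynomial $f$, forcing $a=0$ and in particular $a_j=0$ for all $j\in J$. This shows $f^g$ has an isolated singularity at the origin, i.e.\ is non-degenerate.

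The only genuinely non-routine step is the vanishing of $\partial f/\partial x_i$ on $\Fix(g)$ for $i\in I$; everything else is formal manipulation. This vanishing is where the hypothesis $g_i\neq 1$ is actually used, and where the assumption that $f$ contains no summands of the form $x_ix_j$ is not needed (the argument is purely character-theoretic on monomials).
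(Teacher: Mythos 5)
Your proof is correct. The paper does not prove this statement at all --- it is quoted from [ET1, Prop.~5] as a known result --- and your argument is precisely the standard one from the literature: the semi-invariance $\tfrac{\partial f}{\partial x_i}(g\cdot x)=g_i^{-1}\tfrac{\partial f}{\partial x_i}(x)$ forces $\tfrac{\partial f}{\partial x_i}$ to vanish identically on $\Fix(g)$ for $i$ with $g_i\neq 1$, so any critical point of $f^g$ extends by zero to a critical point of $f$ and must be the origin. No gaps; your closing remark that the ``no $x_ix_j$ summands'' convention is irrelevant here is also accurate.
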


\section{Polynomials and graphs}
\subsection{Combinatorial data}
Now we introduce following \cite{HK} the conditions on the weight system of quasihomogeneous $f$ such that it will be non-degenerate. Fix $N\in\NN$ and denote $I:=\{1,...,N\}$ and $e_i$ is the standard basis in the lattice $\ZZ_{\geq 0}^N$. For a subset $J\subseteq I$ and a system of weights $(v_1,...,v_N,d)\in\ZZ_{\geq 0}^{N+1}$ with $v_i<d$ and
$k\in\ZZ_{\geq 0}$ denote
\begin{eqnarray*}
\ZZ_{\geq 0}^J&:=& \{\alpha\in\ZZ_{\geq 0}^N\ |\ \alpha_i=0\textup{ for }i\notin J\},\\
(\ZZ^N_{\geq 0})_k&:=& \{\alpha\in \ZZ_{\geq 0}^N\ |\ \sum_i\alpha_i\cdot v_i=k\},\\
(\ZZ_{\geq 0}^J)_k&:=& \ZZ_{\geq 0}^J\cap (\ZZ_{\geq 0}^N)_k.
\end{eqnarray*}

\begin{lemma}[\cite{HK}, Lemma 2.1.]\label{t2.1}
Let us fix the system of weights $(v_1,...,v_N,d)\in\ZZ_{\geq 0}^{N+1}$ with $v_i<d$ and a subset
$R\subseteq (\ZZ_{\ge 0}^N)_d$. For any $k\in I$, we define the set
$$R_k:= \{\alpha\in (\ZZ_{\ge 0}^N)_{d-v_k}\ |\ \alpha+e_k\in R\}.$$
The following conditions are pairwise equivalent:
\begin{list}{}{}
\item[(C1):] 
\quad $\forall\ J\subset I\text{ such that } J\neq \varnothing$\\
\hspace*{2cm} $a)$ $\exists\ \alpha\in R\cap \ZZ_{\geq 0}^J$\\
\hspace*{1.6cm} or $b)$ $\exists\ K\subset I\backslash J\text{ such that }|K|=|J|$\\
\hspace*{5cm}$\text{ and }\forall\ k\in K\ \exists\ \alpha\in R_k\cap \ZZ_{\geq 0}^J.$\\
\item[(C1)':] 
\quad The same as (C1), but only for $J$ such that $|J|\leq \frac{N+1}{2}$.\\
\item[(C2):] 
\quad $\forall\ J\subset I\text{ such that }J\neq \varnothing$\\
\hspace*{2cm}$\exists\ K\subset I\text{ such that }|K|=|J|$\\
\hspace*{4cm}\text{ and }$\forall\ k\in K\ \exists\ \alpha\in R_k\cap \ZZ_{\geq 0}^J.$\\
\item[(C2)':] 
\quad The same as (C2), but only for $J$ such that $|J|\leq \frac{N+1}{2}$.
\end{list}
\end{lemma}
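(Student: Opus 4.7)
The plan is to establish the cycle (C2) $\Rightarrow$ (C1) $\Rightarrow$ (C1)$'$ $\Rightarrow$ (C2)$'$ $\Rightarrow$ (C2); together with the trivial restrictions (C1) $\Rightarrow$ (C1)$'$ and (C2) $\Rightarrow$ (C2)$'$, this yields all four equivalences. For bookkeeping, write $N(J) := \{k \in I : R_k \cap \ZZ_{\geq 0}^J \ne \varnothing\}$; the assignment $J\mapsto N(J)$ is monotone, and condition (C2) is exactly the Hall-type inequality $|N(J)| \geq |J|$ for every non-empty $J \subseteq I$.

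First, for (C2) $\Rightarrow$ (C1) one performs a direct case split. Take the $K \subseteq N(J)$ with $|K| = |J|$ supplied by (C2). If $K \cap J = \varnothing$, then $K \subseteq I \setminus J$ and we are in case $(b)$ of (C1). Otherwise, pick any $k \in K \cap J$ and the corresponding $\alpha \in R_k \cap \ZZ_{\geq 0}^J$; then $\alpha + e_k \in R \cap \ZZ_{\geq 0}^J$ (since $k \in J$), giving case $(a)$. The converse implication (C1)$'$ $\Rightarrow$ (C2)$'$ is then proved by induction on $|J| \leq \frac{N+1}{2}$: if case $(b)$ of (C1) holds for $J$, then already $|N(J) \setminus J| \geq |J|$ and we are done; if only case $(a)$ holds, pick $\alpha \in R \cap \ZZ_{\geq 0}^J$ with support $S := \mathrm{supp}(\alpha) \subseteq J$, observe $S \subseteq N(J) \cap J$ because $\alpha - e_k \in R_k \cap \ZZ_{\geq 0}^J$ for each $k \in S$, and apply the inductive hypothesis to the strictly smaller subset $J \setminus S$ (which still has size $\leq \frac{N+1}{2}$) to harvest enough further indices to assemble a $K$ of size $|J|$.

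The delicate step is (C2)$'$ $\Rightarrow$ (C2): one must upgrade the Hall inequality from the small-$J$ regime to all $J$. Suppose for contradiction that $J$ is a minimal violator with $|J| > \frac{N+1}{2}$ and $|N(J)| < |J|$. Then $|I \setminus J| < \frac{N-1}{2} \leq \frac{N+1}{2}$, so (C2)$'$ applies to any non-empty subset of $I \setminus J$, and (C2) holds for every $J' \subsetneq J$ by minimality. The idea is to derive a contradiction by leveraging monotonicity of $N$ together with a deficiency/complementation argument: shrink $J$ one index at a time in a way that preserves the strict deficiency $|N(\cdot)| < |\cdot|$ until one reaches a set of size $\leq \frac{N+1}{2}$ that violates (C2)$'$.

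The main obstacle is exactly this last reduction. Since $N(\cdot)$ is not additive under unions and complements, a Hall-violator on $J$ does not transfer set-theoretically to its complement. I expect the crux to require a deficiency version of Hall's theorem applied to the auxiliary bipartite structure with left and right vertex set $I$ and edges $k \to j$ whenever some $\alpha \in R_k$ has $j \in \mathrm{supp}(\alpha)$, combined with the degree equation $\sum_i \alpha_i v_i = d$ which constrains the possible supports. The careful part is to ensure that the minimality of $J$, together with the size bound $|J| + |I \setminus J| = N$ straddling $\frac{N+1}{2}$, forces the hypothetical deficiency to descend to a subset of size at most $\frac{N+1}{2}$ on which (C2)$'$ must then already fail, completing the contradiction.
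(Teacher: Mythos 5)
First, a point of order: the paper contains no proof of this statement --- it is quoted verbatim from Hertling--Kurbel [HK, Lemma 2.1] and used as a black box, so there is no in-paper argument to compare yours against; your proposal has to stand on its own. Judged that way, it has one sound step and two genuine gaps. The step $(C2)\Rightarrow(C1)$ is correct. In $(C1)'\Rightarrow(C2)'$, however, the induction as written does not close: with $N(J):=\{k\in I: R_k\cap\ZZ_{\geq 0}^J\neq\varnothing\}$ you obtain $S=\mathrm{supp}(\alpha)\subseteq N(J)\cap J$ and, from the inductive hypothesis applied to $J\setminus S$, a set $K'\subseteq N(J\setminus S)\subseteq N(J)$ with $|K'|=|J|-|S|$; but nothing prevents $K'$ from meeting $S$, so $|S\cup K'|$ can be strictly smaller than $|J|$ and the bound $|N(J)|\geq |J|$ is not established. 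This gap is repairable, and in fact no induction is needed: set $B:=N(J)\cap J$ and $J_0:=J\setminus B$; if $J_0\neq\varnothing$, case $(a)$ of (C1) cannot hold for $J_0$ (any $\alpha\in R\cap\ZZ_{\geq 0}^{J_0}$ is nonzero because $d>0$, and each $k\in\mathrm{supp}(\alpha)$ would then lie in $B$, contradicting $\mathrm{supp}(\alpha)\subseteq J_0$), so case $(b)$ supplies $K_0\subseteq N(J_0)\subseteq N(J)$ with $|K_0|=|J_0|$, and a short argument using $v_k<d$ shows $K_0\cap B=\varnothing$ (an element $k\in K_0\cap B$ would force some $j\in J_0$ into $B$); hence $K=B\sqcup K_0$ witnesses (C2), and since $J_0\subseteq J$ the same argument gives the primed version. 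None of this is in your text.

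The more serious gap is $(C2)'\Rightarrow(C2)$, which you yourself flag as ``the delicate step'' and for which you offer only a hoped-for strategy rather than an argument. Moreover, the specific strategy you sketch --- shrink $J$ one index at a time while preserving the strict deficiency $|N(\cdot)|<|\cdot|$ --- cannot work: since $N(J\setminus\{j\})\subseteq N(J)$, the deficiency $|J|-|N(J)|$ can drop by one with every removed index, and a violator only guarantees deficiency $1$ to begin with, while reaching $|J'|\leq\frac{N+1}{2}$ from a large $J$ may require deleting up to $\frac{N-1}{2}$ indices. Standard Hall-type machinery also does not apply directly, precisely because $N(J)\neq\bigcup_{j\in J}N(\{j\})$, as you observe. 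This reduction to subsets of size at most $\frac{N+1}{2}$ is the genuinely nontrivial content of the lemma (it goes back to Shcherbak and Kreuzer--Skarke), and your proposal does not prove it; as it stands the cycle $(C2)\Rightarrow(C1)\Rightarrow(C1)'\Rightarrow(C2)'\Rightarrow(C2)$ is broken at its last, essential link.
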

This lemma allows to formulate the criteria of non-degeneracy of a polynomial $f$ in a combinatorial way.

\begin{theorem}[\cite{HK}, \cite{Sa}, \cite{OPSh},\cite{KS}]{\label{NDcy}}
Let $(v_1,...,v_n,d)\in\NN^{N+1}$ be a system of weights with $v_i<d$ and for any $f\in \CC[\bx]$ define $supp(f) = \{ \alpha \in \ZZ_{\geq 0}^{N} \;|\; x_1^{\alpha_1}x_2^{\alpha_2}\dots x_N^{\alpha_N} \; is\; a\; term\; of\; f\} $.

(a) Let $f\in\CC[\bx]$ be a quasihomogeneous polynomial.
Then from the following condition
\begin{list}{}{}
\item[(IS1):] 
\quad $f$ is non-degenerate,
\end{list}

we have that the set $R := supp(f) \subset (\ZZ_{\geq 0}^N)_d$ satisfies the conditions (C1)-(C2)'.

\medskip
(b) Let $R$ be a subset $(\ZZ_{\geq 0}^N)_d$. Then, the following conditions are equivalent: 
\begin{list}{}{}
\item[(IS2):] \quad There is a a quasihomogeneous polynomial $f$ such that $ supp(f)\subseteq R$
and $f$ is non-degenerate.
\item[(IS2)':] \quad \quad A generic quasihomogeneous polynomial $f$ such that $supp(f) \subseteq R$ is non-degenerate.
\item[(C1) to (C2)':] \quad $R$ satisfies the conditions (C1)-(C2)'.
\end{list}
\end{theorem}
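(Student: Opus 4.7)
Plan for the proof of Theorem \ref{NDcy}:

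For part (a), I would argue by contrapositive, showing that if $R = supp(f)$ violates (C1) for some non-empty $J \subseteq I$, then $f$ has a non-isolated critical locus. Failure of (C1)(a) means $R \cap \ZZ_{\geq 0}^J = \varnothing$, so $f$ vanishes identically on the coordinate subspace $V(J) := \{x\in\CC^N : x_i = 0 \text{ for } i \notin J\}$; Euler's identity then reduces on $V(J)$ to the linear relation $\sum_{j \in J} v_j x_j \cdot (\partial_j f|_{V(J)}) = 0$. Failure of (C1)(b) gives $|S| < |J|$, where $S := \{k \in I \setminus J : R_k \cap \ZZ_{\geq 0}^J \neq \varnothing\}$, whence $\partial_k f|_{V(J)} \equiv 0$ for every $k \in I \setminus (J \cup S)$. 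On the torus $T_J \subset V(J)$ where all $x_j$ with $j\in J$ are nonzero, the critical system thus reduces to at most $|J| - 1 + |S|$ equations in $|J|$ variables, and coupled with the free $\CC^*$-action supplied by quasihomogeneity, a careful dimension count produces a positive-dimensional critical locus on $T_J$, contradicting (IS1). The switch to (C1)'--(C2)' is then handled by Lemma \ref{t2.1}.

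For part (b), the implication (IS2)$\Rightarrow$(C1)--(C2)' is a direct consequence of (a), and (IS2)'$\Rightarrow$(IS2) is immediate. The main content is (C1)--(C2)'$\Rightarrow$(IS2)'. I would view coefficient vectors $(c_\alpha)_{\alpha \in R} \in \CC^R$ and observe that the locus $D \subset \CC^R$ yielding a degenerate $f$ is Zariski-closed, hence either empty or a finite union of proper subvarieties or all of $\CC^R$. For each non-empty $J$, the combinatorial conditions (C1)--(C2)', combined with the formulation via Lemma \ref{t2.1}, guarantee a sufficient supply of algebraically independent monomials in the partials restricted to $T_J$ to produce, for generic coefficients, no common zero on $T_J$. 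Taking the intersection over the finitely many $J$ preserves openness and density, yielding a dense open locus of non-degenerate polynomials in $\CC^R$.

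The hard part will be the dimension argument in part (a) when $|S|$ is close to $|J| - 1$: the naive codimension bound $|J| - (|J| - 1 + |S|) = 1 - |S|$ collapses or becomes negative, so one cannot conclude directly. Overcoming this obstacle requires an inductive strategy: either induct on $|J|$, passing to the enlarged stratum $T_{J \cup S'}$ for a minimal $S' \subseteq S$ and iteratively exploiting (C1)--(C2)', or invoke a Kouchnirenko--Khovanskii type count over the appropriate Newton polytope to ensure that the common-zero locus persists despite appearing overdetermined. A parallel delicacy appears in part (b), where the genericity argument demands producing, for each $J$, an explicit coefficient witness whose existence is guaranteed by (C1)--(C2)' but is not constructively exhibited by the conditions themselves; this is again where Lemma \ref{t2.1} does the combinatorial lifting.
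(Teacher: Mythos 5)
This theorem is quoted in the paper from \cite{HK}, \cite{Sa}, \cite{OPSh}, \cite{KS} without proof, so there is no in-paper argument to compare against; I can only assess your sketch on its own terms, and it has a genuine gap in part (a). You correctly observe that failure of (C1)(a) for $J$ means $R\cap\ZZ_{\geq 0}^J=\varnothing$, hence $f|_{V(J)}\equiv 0$, and that failure of (C1)(b) gives $|S|<|J|$ where $S=\{k\in I\setminus J: R_k\cap\ZZ_{\geq 0}^J\neq\varnothing\}$, so $\partial_k f|_{V(J)}\equiv 0$ for $k\in I\setminus(J\cup S)$. But you then use Euler's identity to discard only \emph{one} of the $|J|$ equations $\partial_j f|_{V(J)}=0$, $j\in J$, arriving at ``$|J|-1+|S|$ equations in $|J|$ variables,'' which can be as large as $2|J|-2\geq |J|$; as you yourself note, this count does not yield a positive-dimensional critical locus, and the inductive or Kouchnirenko-type repairs you propose are left entirely unspecified. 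The missing observation is that $R\cap\ZZ_{\geq 0}^J=\varnothing$ already forces $\partial_j f|_{V(J)}\equiv 0$ for \emph{every} $j\in J$: if a monomial $x^{\beta-e_j}$ of $\partial_j f$ survives restriction to $V(J)$ then $\mathrm{supp}(\beta-e_j)\subseteq J$, and since $j\in J$ this gives $\beta\in R\cap\ZZ_{\geq 0}^J$, a contradiction. Hence the critical system on $V(J)\cong\CC^{|J|}$ reduces to the $|S|<|J|$ equations indexed by $S$ alone, whose common zero locus through the origin has dimension at least $|J|-|S|\geq 1$, and $f$ is degenerate. The ``hard part'' you flag is an artifact of the miscount and disappears once this stronger vanishing is used.

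For part (b), your reductions (IS2)$'\Rightarrow$(IS2) and (IS2)$\Rightarrow$(C1)--(C2)$'$ are fine, and the Zariski-closedness of the degenerate locus (by upper semicontinuity of the Milnor number) correctly reduces everything to exhibiting a single non-degenerate $f$ with $supp(f)\subseteq R$. But that step, (C1)$\Rightarrow$(IS2), is precisely the substantive content of the cited references, and your sketch only gestures at it (``a sufficient supply of algebraically independent monomials \dots for generic coefficients, no common zero on $T_J$'') without an argument. As it stands this direction is asserted rather than proved, so the proposal does not constitute a complete proof of the theorem.
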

\subsection{Graph description of non-degeneracy}\label{comb}
We call a map $\kappa\colon I \to I$ a \emph{choice} if it satisfies the following condition: For every $ j \in I$ the sets $J = \{j\}$ and $K = \{\kappa(j)\}$ satisfy $(C2)$ for some fixed $R$. If we consider any quasihomogeneous polynomial $f$ and put $R=supp(f)$, then $\kappa$ is a choice if $f$ contains as a summand $b_j\cdot x_j^{a_j}\cdot x_{\kappa(j)}$, where $b_j\in \CC^*, a_j \in \NN$ and $a_j \geq 2$. Following this claim, polynomial $f$ has a form $f = f_\kappa + f_{add}$, where $f_\kappa$ is a part determined by the choice $\kappa$ and $f_{add} : = f - f_\kappa$. 
\begin{proposition}[\cite{HK}, Lemma 3.5.]
    Polynomial $f$ is invertible if and only if $f = f_\kappa$ for some $\kappa$. In particular, $f_{add} = 0$.
\end{proposition}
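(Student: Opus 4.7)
My plan is to prove the two implications separately. The ``if'' direction is handled by strict diagonal dominance of the exponent matrix; the ``only if'' direction uses the combinatorial condition (C2) from Lemma \ref{t2.1}.

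For the ``if'' direction, I would start from $f = f_\kappa$ and unpack $f = \sum_{j \in I} b_j x_j^{a_j} x_{\kappa(j)}$, with the convention that the $j$-th summand stands for $b_j x_j^{a_j + 1}$ whenever $\kappa(j) = j$. Since $a_j \geq 2$, the variable $x_j$ is the unique one with exponent at least $2$ in the $j$-th summand, so the $N$ summands indexed by $j \in I$ are pairwise distinct, and $f$ has exactly $N$ monomials in $N$ variables. The $N \times N$ exponent matrix $E$ then satisfies $E_{jj} \geq 2$ and $\sum_{k \neq j} |E_{jk}| \leq 1$ in every row, so $E$ is strictly diagonally dominant and hence invertible over $\QQ$. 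Combined with the standing non-degeneracy hypothesis on $f$, this shows $f$ is invertible.

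For the ``only if'' direction, assume $f$ is invertible. Non-degeneracy together with Theorem \ref{NDcy}(a) ensures that $R := \mathrm{supp}(f)$ satisfies condition (C2). Applied to each singleton $J = \{j\}$, (C2) yields $\kappa(j) \in I$ and $a_j \in \ZZ_{\geq 0}$ with $a_j e_j + e_{\kappa(j)} \in R$, i.e., $f$ contains the summand $b_j x_j^{a_j} x_{\kappa(j)}$. The bound $v_{\kappa(j)} < d$ rules out $a_j = 0$, and the blanket assumption ``no $x_i x_j$ summands'' rules out $a_j = 1$, so $a_j \geq 2$ and the resulting map $\kappa : I \to I$ is a genuine choice. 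The $N$ summands of $f_\kappa$ are pairwise distinct by the same argument as in the first direction, and since invertibility forces $f$ to have exactly $N$ monomials in total, these $N$ summands must exhaust $f$, giving $f = f_\kappa$ and hence $f_{add} = 0$.

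I do not anticipate a serious obstacle here: both directions reduce to short routine arguments once the combinatorial setup of Section 3 is in place. The one subtle point is the use in the ``only if'' direction of the hypothesis that $f$ has precisely $N$ monomials; without it, one would only obtain that $f_\kappa$ is a sub-sum of $f$, not the full equality.
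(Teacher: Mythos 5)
The paper offers no proof of this proposition at all: it is imported verbatim from \cite{HK}, Lemma 3.5, so there is no in-text argument to compare yours against. Taken on its own, your proof is correct and self-contained. In the ``if'' direction, strict diagonal dominance of the exponent matrix (diagonal entries $\geq 2$, off-diagonal row sums $\leq 1$) is a clean way to get invertibility over $\QQ$, and you are right to insist that non-degeneracy be carried as a standing hypothesis: a degenerate $f_\kappa$ (e.g.\ a loop with branches) still has $N$ distinct monomials and an invertible exponent matrix, yet fails to be invertible in the paper's sense precisely because of degeneracy. In the ``only if'' direction, extracting from condition (C2) of Lemma \ref{t2.1} (via Theorem \ref{NDcy}(a)) one monomial $x_j^{a_j}x_{\kappa(j)}$ per variable, observing that these $N$ monomials are pairwise distinct because the $j$-th is the unique one in which $x_j$ appears with exponent $\geq 2$, and then using the count of exactly $N$ monomials forced by invertibility, is exactly the right mechanism; your closing remark that without the monomial count one would only get $f_\kappa$ as a sub-sum of $f$ identifies the genuinely load-bearing step. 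The one loose end is cosmetic: when $\kappa(j)=j$, the exclusion of $a_j=1$ does not follow from the ``no $x_ix_j$ summands'' convention (that case is the pure square $x_j^2$), but nothing in your argument actually requires $a_j\geq 2$ there, since $x_j^{a_j+1}$ still has $x_j$-exponent $\geq 2$ and the distinctness and diagonal-dominance arguments go through unchanged.
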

In this paper we work with non-invertible polynomials (i.e. $f_{add} \neq 0$) and to do this we use the graph method following \cite{HK}. We construct by a map $\kappa: I\to I$ the graph $\Gamma_\kappa$ with vertices labeled by the set $I$ and there is an oriented edge pointing from the $j$-th vertex to the $i$-th vertex if and only if $i = \kappa(j)$. We assume that if $\kappa(j) = j$, we do not draw the edge, therefore we obtain a graph without self-loops. We will call \emph{type} the conjugacy class $\kappa$ with respect to the natural action of the symmetric group on the set of indices. In this case the oriented graph without numbering of vertices defines the type.

\begin{theorem}[\cite{AGV}, \S 13.2]\label{AGV}
Let $f(x_1,x_2,x_3)$ be a non-degenerate polynomial of three variables. Then, the graphs corresponding to all possible choices for $f(x_1,x_2,x_3)$ are exhausted by the following graphs (types) up to permutation of variables:

\[\begin{tikzcd}[column sep=small]
	& \bullet{1} &&&& \bullet{1} &&&& {\bullet}{1} &&&& \bullet{1} \\
	\\
	\bullet{2} && \bullet{3} && \bullet{2} && \bullet{3} && \bullet{2} && {\bullet}{3} && \bullet{2} && \bullet{3} \\
	\\
	& \bullet{1} &&&& {\bullet}{1} &&&& \bullet{1} \\
	\\
	\bullet{2} && \bullet{3} && \bullet{2} && {\bullet}{3} && \bullet{2} && \bullet{3}
	\arrow[from=1-6, to=3-7]
	\arrow[from=3-11, to=3-9]
	\arrow[from=1-10, to=3-9]
	\arrow[curve={height=6pt}, from=1-14, to=3-15]
	\arrow[curve={height=6pt}, from=3-15, to=1-14]
	\arrow[from=5-2, to=7-3]
	\arrow[from=5-6, to=7-5]
	\arrow[curve={height=6pt}, from=7-5, to=7-7]
	\arrow[curve={height=6pt}, from=7-7, to=7-5]
	\arrow[from=7-3, to=7-1]
	\arrow[from=5-10, to=7-9]
	\arrow[from=7-9, to=7-11]
	\arrow[from=7-11, to=5-10]
	\arrow[draw=none, from=3-1, to=3-3]
	\arrow[draw=none, from=3-5, to=3-7]
	\arrow[draw=none, from=3-13, to=3-15]
	\arrow[shift right=3, draw=none, from=7-5, to=7-7]
\end{tikzcd}
\]
\end{theorem}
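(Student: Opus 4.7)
The statement is a combinatorial classification: the seven graphs are exactly the functional graphs on three unlabeled vertices (with self-loops suppressed) that arise as $\Gamma_\kappa$ for a choice $\kappa$ of some non-degenerate $f(x_1,x_2,x_3)$. Since a choice is by definition a function $\kappa\colon\{1,2,3\}\to\{1,2,3\}$, every vertex of $\Gamma_\kappa$ has out-degree exactly one, so the proof naturally splits into (a) enumerating functional graphs on three unlabeled vertices, and (b) realizing each of them as $\Gamma_\kappa$ of a non-degenerate polynomial.

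For part (a) I would use the standard decomposition of a functional graph into cycles together with in-trees attached to them, and sort by the number of vertices lying on cycles. If all three vertices are on cycles, the options are three $1$-cycles, one $2$-cycle together with one $1$-cycle, and one $3$-cycle, producing Graphs $1$, $4$, $7$. If exactly one vertex is off any cycle, the cycle has length $1$ or $2$, producing Graphs $2$ and $6$. If two vertices are off cycles, they must both sit in the in-tree of a single $1$-cycle, attached either directly or forming a chain of length two, producing Graphs $3$ and $5$. A brief check on the $S_3$-action on the set of functions confirms that no two of these seven types are equivalent, so the enumeration is exhaustive.

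For part (b), the polynomials $f_\kappa=\sum_j c_j x_j^{a_j} x_{\kappa(j)}$ corresponding to Graphs $1, 2, 4, 5, 7$ are themselves non-degenerate: a direct case analysis on the common zeros of $(\partial_1 f_\kappa,\partial_2 f_\kappa,\partial_3 f_\kappa)$, using only $a_j\geq 2$, forces all coordinates to vanish. For Graphs $3$ and $6$ the polynomial $f_\kappa$ is degenerate on the locus $\{x_2=0\}$, where the remaining partials reduce to an equation of the form $c_ix_i^{a_i}+c_k x_k^{a_k}=0$ in the two variables not pointing into $x_2$. Adding a single monomial $\varepsilon x_i^{b_i}x_k^{b_k}$ with $(b_i,b_k)$ forced by the quasihomogeneity condition $b_iq_i+b_kq_k=1$ yields a non-degenerate $f=f_\kappa+f_{add}$ for generic $\varepsilon$; this can be verified either by a short Jacobian computation or by checking condition (C2)' of Lemma \ref{t2.1} on the enlarged support $R=\mathrm{supp}(f_\kappa)\cup\{(b_1,0,b_3)\}$ (resp.\ its analogue for Graph $6$).

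The main obstacle is this last non-degeneracy check for the two non-invertible cases. Concretely, after adding the extra monomial one must rule out non-origin zeros of the Jacobian both on $\{x_2=0\}$ and on the coordinate subspaces cut out by the tree variables. The first is handled by the added monomial since $b_i,b_k\geq 1$ give $\partial_i f_{add}$ and $\partial_k f_{add}$ that vanish only at $x_i=x_k=0$; the second is handled by the cycle relations using $a_ia_k>1$, exactly as in the standard analysis of the invertible loop polynomial, and this is the mechanism that motivates the systematic treatment in Theorems \ref{kappa-J} and \ref{c}.
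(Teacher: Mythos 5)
Your proposal is correct in substance, but note that the paper does not prove this statement at all: it is quoted from \cite{AGV}, \S 13.2 (and the general-$N$ structure statement is deferred to Proposition \ref{arb}, i.e.\ \cite{HK}, Lemma 3.1). So what you have written is a self-contained reconstruction rather than a variant of the paper's argument. Your route is the natural one and is consistent with the machinery the paper does set up: the exhaustiveness direction is exactly your part (a) — a choice is a self-map of $\{1,2,3\}$, every vertex has out-degree one, and the cycle-plus-in-tree decomposition gives precisely $3+2+2=7$ types, pairwise distinguished by number of fixed points and tree shape — combined with the remark that non-degeneracy guarantees a choice exists in the first place (condition (C2) of Lemma \ref{t2.1} applied to singletons $J=\{j\}$). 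Your part (b) (realizability) is what justifies the list being tight, and your split into the five types where $f_\kappa$ is already non-degenerate (Fermat/chain/loop blocks) versus the two types needing an extra monomial matches the paper's list of $f_1,\dots,f_7$ and anticipates Theorems \ref{kappa-J} and \ref{c}. Two minor imprecisions: first, for graphs $3$ and $6$ the residual equation $x_1^{a_1}+x_3^{a_3}=0$ on $\{x_2=0\}$ involves the two variables whose arrows \emph{do} end at $x_2$, not the ones that do not, as you wrote; second, the existence of nonnegative integers $(b_1,b_3)$ with $b_1q_1+b_3q_3=1$ is not automatic — it is equivalent to the divisibility conditions the paper records next to $f_3$ and $f_6$ — so for realizability you should exhibit at least one admissible exponent vector (e.g.\ $a_2=2$, $a_1=a_3=a$, $b_1=b_3=a$), though this does not affect the exhaustiveness claim that the theorem literally asserts.
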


In particular, polynomials corresponding to the graphs in Theorem \ref{AGV} have the following form:
\begin{itemize}
    \item $f_1 = x_1^{a_1}+x_2^{a_2}+x_3^{a_3}$  
    
    \item $f_2 = x_1^{a_1}x_3+ x_2^{a_2} + x_3^{a_3}$  
    
    \item $f_3 = x_1^{a_1}x_2+x_3^{a_3}x_2 + x_2^{a_2} + \varepsilon_{1,3}x_1^{b_1}x_3^{b_3}$, where $(a_2-1)\;|$ lcm$(a_1,a_3)$ and $\varepsilon_{1,3} \in \CC^*$
    
    \item $f_4 = x_2^{a_2} + x_1^{a_1}x_3 + x_3^{a_3}x_1$  
    
    \item $f_5 = x_1^{a_1}x_3 + x_3^{a_3}x_2  + x_2^{a_2}$ 
    
    \item $f_6 = x_1^{a_1}x_2 + x_2^{a_2}x_3 + x_3^{a_3}x_2 + \varepsilon_{1,3}x_1^{b_1}x_3^{b_3}$, where $(a_2-1)\cdot \text{gcd}(a_1,a_3)\; | \;(a_1-1)$ and $\varepsilon_{1,3} \in \CC^*$
    
    \item $f_7 = x_1^{a_1}x_2 + x_2^{a_2}x_3 + x_3^{a_3}x_1$ 
    
\end{itemize}

The conditions for the polynomials $f_3$ and $f_6$ described above are obtained if we explicitly write down the equation for the weights $(v_1,v_2,v_3)$ (see \cite{AGV}). In general there is the following statement, which describes the graphs that can be obtained:

\begin{proposition}[\cite{HK}, Lemma 3.1]{\label{arb}}
Exactly those graphs occur as graphs of maps $\kappa:I \to I$ whose components either are globally oriented trees or consist of one globally oriented cycle and finitely many globally oriented trees whose roots are on the cycle.
\end{proposition}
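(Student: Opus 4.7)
The plan is to interpret $\Gamma_\kappa$ as the functional graph of $\kappa$ with the self-loops coming from fixed points deleted, and then to apply the standard orbit analysis of a self-map of a finite set. Since $\kappa$ is a function, every vertex $j \in I$ has out-degree at most one in $\Gamma_\kappa$, and the out-degree is zero precisely when $\kappa(j) = j$.

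For the forward implication, I fix a connected component $C$ of $\Gamma_\kappa$, pick any vertex $j \in C$, and iterate $\kappa$. The sequence $j, \kappa(j), \kappa^{2}(j), \dots$ is eventually periodic since $I$ is finite; let $\mathcal{O}$ denote its periodic part. If $|\mathcal{O}| \geq 2$, then $\mathcal{O}$ traces out a globally oriented cycle inside $C$, and every other vertex of $C$ is carried by some power of $\kappa$ into $\mathcal{O}$, hence lies on a directed path ending on that cycle. Because every non-cycle vertex has out-degree exactly one and the edges of $\Gamma_\kappa$ never branch out of a single vertex, the non-cycle part of $C$ decomposes into a disjoint union of globally oriented trees whose roots sit on the cycle. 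Uniqueness of the cycle inside $C$ follows because two disjoint $\kappa$-cycles in one component would force some vertex to admit two distinct images under $\kappa$. If instead $|\mathcal{O}| = 1$, the unique cycle is a self-loop at a fixed point $r \in C$; by our drawing convention this loop is erased, leaving a globally oriented tree with root $r$.

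For the reverse implication, I start with a graph $\Gamma$ whose components have one of the two stated shapes and define $\kappa$ vertex by vertex. On a component consisting of one oriented cycle with trees attached, I send each vertex to the head of its unique outgoing edge; on a pure tree component rooted at $r$, I do the same at every non-root vertex and set $\kappa(r) := r$. This prescription is well defined because every vertex other than a tree root has exactly one outgoing edge in $\Gamma$, and by construction $\Gamma_\kappa = \Gamma$ after erasing self-loops at fixed points.

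I do not anticipate a serious technical obstacle: the statement is essentially the structure theorem for functional graphs on a finite set. The only subtlety is the bookkeeping imposed by the self-loop convention, which forces one to treat length-one cycles separately and to recognise that a purely tree-shaped component conceals a fixed point at its root.
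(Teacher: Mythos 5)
Your argument is correct: it is the standard orbit analysis of a self-map of a finite set (out-degree one everywhere, eventually periodic forward orbits, one cycle per component, fixed points hidden as tree roots by the self-loop convention), which is exactly how this fact is established in the cited source. Note that the paper itself offers no proof of this proposition --- it is quoted verbatim from \cite{HK}, Lemma~3.1 --- so your write-up supplies a proof where the paper only gives a reference, and it does so along essentially the same lines.
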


In other words, any connected component of $\Gamma_\kappa$ has the form of a loop with branches (see Fig. 1) including the case when the \say{loop} has length 1. It follows that any polynomial $f_\kappa$ has the following decomposition:
\begin{align*}
    f_\kappa = f_{inv} + f_1 + f_2 + \dots + f_p
\end{align*}
where $f_{inv}$ is an invertible polynomial, and $f_i$ is a polynomial corresponding to the $i$-th connected component of the graph $\kappa$.
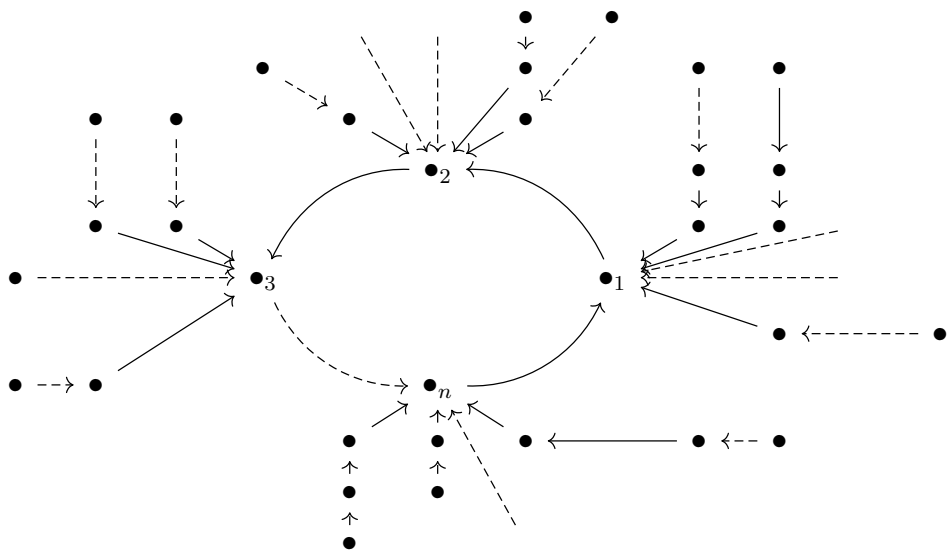
\begin{figure}[H]
\[\begin{tikzcd}[column sep=small,row sep=tiny]
	&&&& \textcolor{white}{\bullet} & \textcolor{white}{\bullet} & \bullet & \bullet \\
	&&& \bullet &&& \bullet && \bullet & \bullet \\
	& \bullet & \bullet && \bullet && \bullet \\
	&&&&& {\bullet_2} &&& \bullet & \bullet \\
	& \bullet & \bullet &&&&&& \bullet & \bullet & \textcolor{white}{\bullet} \\
	\bullet & \textcolor{white}{\bullet} && {\bullet_3} &&&& {\bullet_1} && \textcolor{white}{\bullet} & \textcolor{white}{\bullet} \\
	&&&&&&&&& \bullet && \bullet \\
	\bullet & \bullet &&&& {\bullet_n} \\
	&&&& \bullet & \bullet & \bullet && \bullet & \bullet \\
	&&&& \bullet & \bullet \\
	&&&& \bullet && \textcolor{white}{\bullet}
	\arrow[curve={height=18pt}, from=4-6, to=6-4]
	\arrow[curve={height=18pt}, dashed, from=6-4, to=8-6]
	\arrow[curve={height=18pt}, from=8-6, to=6-8]
	\arrow[curve={height=18pt}, from=6-8, to=4-6]
	\arrow[from=5-9, to=6-8]
	\arrow[from=4-9, to=5-9]
	\arrow[dashed, from=2-9, to=4-9]
	\arrow[from=5-10, to=6-8]
	\arrow[from=4-10, to=5-10]
	\arrow[from=2-10, to=4-10]
	\arrow[from=7-10, to=6-8]
	\arrow[dashed, from=6-11, to=6-8]
	\arrow[dashed, from=5-11, to=6-8]
	\arrow[from=3-7, to=4-6]
	\arrow[dashed, from=1-8, to=3-7]
	\arrow[from=2-7, to=4-6]
	\arrow[dashed, from=1-7, to=2-7]
	\arrow[dashed, from=2-4, to=3-5]
	\arrow[from=3-5, to=4-6]
	\arrow[dashed, from=1-5, to=4-6]
	\arrow[dashed, from=1-6, to=4-6]
	\arrow[from=5-3, to=6-4]
	\arrow[dashed, from=3-3, to=5-3]
	\arrow[dashed, from=6-1, to=6-4]
	\arrow[from=5-2, to=6-4]
	\arrow[dashed, from=3-2, to=5-2]
	\arrow[from=8-2, to=6-4]
	\arrow[dashed, from=7-12, to=7-10]
	\arrow[dashed, from=8-1, to=8-2]
	\arrow[from=9-5, to=8-6]
	\arrow[from=10-5, to=9-5]
	\arrow[dashed, from=11-5, to=10-5]
	\arrow[from=9-6, to=8-6]
	\arrow[from=9-7, to=8-6]
	\arrow[dashed, from=10-6, to=9-6]
	\arrow[dashed, from=11-7, to=8-6]
	\arrow[from=9-9, to=9-7]
	\arrow[dashed, from=9-10, to=9-9]
\end{tikzcd}\]
    \caption{Loop with branches graph}
    \label{fig:enter-label}
\end{figure}

Since we are interested in the non-invertible case, in what follows we denote by $f_\kappa$ a polynomial with only one connected component arising from the loop with branches graph and a set of powers $(a_1,\dots, a_N)$ defining the weight system $(v_1,...,v_N,d)\in\ZZ_{\geq 0}^{N+1}$. 

Notice every variable corresponds to a node with only one outgoing
arrow. Hence we obtain the claim:
\begin{proposition}{\label{scaling}}
    Let $f_\kappa$ = $\sum_{j=1}^{N}b_jx_j^{a_j}x_{\kappa(j)}$. Then we can assume that all $b_j$ are equal to 1.
\end{proposition}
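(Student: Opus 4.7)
The plan is to perform a diagonal rescaling $x_j \mapsto \lambda_j x_j$ for a suitably chosen tuple $(\lambda_1,\dots,\lambda_N) \in (\CC^*)^N$. Under such a substitution the monomial $b_j x_j^{a_j} x_{\kappa(j)}$ is multiplied by $\lambda_j^{a_j} \lambda_{\kappa(j)}$, so the statement reduces to solvability of the system
\[
b_j \, \lambda_j^{a_j} \, \lambda_{\kappa(j)} = 1 \qquad \text{for all } j \in I.
\]
The key structural input is Proposition \ref{arb}: the connected graph $\Gamma_\kappa$ consists of a single oriented loop with rooted trees attached, and every vertex has exactly one outgoing arrow. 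I would exploit this by first solving the system on the loop and then propagating the solution outward along the trees.

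For the loop, let its vertices be $i_1,\dots,i_k$ with $\kappa(i_l)=i_{l+1 \bmod k}$. Passing to logarithms $\mu_{i_l}:=\log\lambda_{i_l}$ (any branch is fine), the equations on the loop form a $k\times k$ linear system whose coefficient matrix has $a_{i_l}$ on the diagonal and $1$ in position $(l, l+1 \bmod k)$, and zeros elsewhere. Expanding the determinant along the first row gives
\[
\det = \prod_{l=1}^{k} a_{i_l} \;-\; (-1)^{k},
\]
which is nonzero because each $a_{i_l}\ge 2$ forces $\prod_l a_{i_l} \ge 2^k > 1$. Hence the linear system has a unique solution over $\CC$, and exponentiating produces the required $\lambda_{i_l}\in\CC^*$. (Note that a loop of length $1$, corresponding to $\kappa(j)=j$, is automatically covered: the single equation $(a_j+1)\mu_j = -\log b_j$ is solvable since $a_j+1\neq 0$.)

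For the branches I would proceed by induction on the graph-theoretic distance of a vertex to the loop. If $j$ is a branch vertex, then $\kappa(j)$ is strictly closer to the loop (or lies on it), so by the inductive hypothesis $\lambda_{\kappa(j)}\in\CC^*$ has already been fixed. Since $b_j\lambda_{\kappa(j)}\in\CC^*$, we may take any $a_j$-th root in $\CC^*$ of $(b_j\lambda_{\kappa(j)})^{-1}$ as our value of $\lambda_j$, which satisfies the required equation.

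The only step that is not a direct unwinding along the tree structure is the cyclic system on the loop, and this is precisely the place where one might worry about a coincidence killing solvability; the determinant computation above is what makes the argument go through. Once this is established, the propagation along the branches is automatic and yields a rescaling that reduces every $b_j$ to $1$.
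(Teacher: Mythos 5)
Your proof is correct and follows the same idea the paper relies on: the observation that each vertex has exactly one outgoing arrow makes the coefficients removable by a diagonal rescaling $x_j\mapsto\lambda_j x_j$. The paper states this in one sentence without justification, whereas you supply the one genuinely non-trivial ingredient — that the cyclic $k\times k$ system on the loop has determinant $\prod_l a_{i_l}-(-1)^k\neq 0$ because every $a_{i_l}\ge 2$ — together with the inductive propagation along the branches, so your argument is a complete and accurate filling-in of the paper's claim.
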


Let us also introduce the gluing a point operation on the graphs. Let $\kappa\colon I \to I$ be the map defining the loop with branches graph $\Gamma_\kappa$ and let $T_\kappa \subset I$ be the set of leaves of the graph $\Gamma_\kappa$, i.e. the set of vertices on
the end of each branch. Let $t \in T_\kappa$ be a leaf and $\{N+1\}$ is an isolated vertex. Define the operation $\circ_{\Gamma_\kappa}(N+1, t)$ which adds an arrow from the isolated vertex $N+1$ to the leaf $t$. It means that $\circ_{\Gamma_\kappa}(N+1, t) = \Gamma_{\bar{\kappa}}$, where $\bar{\kappa}$ is a new graph with glued vertex. We could also expand the action of $\circ_{\kappa}$ on $k$ isolated vertices and $k$ leaves. In this case we have $\circ_{\Gamma_\kappa}((N+1, \dots, N+k), (t_1, \dots, t_k))$ and connect $N+i$ with $t_i$. This construction will be useful in Section 6 and 7 to describe the new polynomial $\bar{f}$ with graph obtaining by this action and show the construction of orbifold equivalence.
\begin{example}
Let us consider the following graph $\Gamma_{\kappa}$ and an isolated vertex \{10\}:
\[\begin{tikzcd}[sep=small]
	& {\bullet_6} \\
	&&& {\bullet_5} &&&& {\bullet_{10}} \\
	{\bullet_7} &&& {\bullet_1} && {\bullet_9} \\
	& {\bullet_8} & {\bullet_2} && {\bullet_4} \\
	&&& {\bullet_3}
	\arrow[curve={height=12pt}, from=3-4, to=4-3]
	\arrow[curve={height=12pt}, from=4-3, to=5-4]
	\arrow[curve={height=12pt}, from=5-4, to=4-5]
	\arrow[curve={height=12pt}, from=4-5, to=3-4]
	\arrow[from=1-2, to=3-4]
	\arrow[from=2-4, to=3-4]
	\arrow[from=4-2, to=4-3]
	\arrow[from=3-1, to=4-2]
	\arrow[from=3-6, to=4-5]
\end{tikzcd}\]
Then $\circ_{\Gamma_\kappa}(\{10\},\{9\} ) = \Gamma_{\kappa_2}$ and $\circ_{\Gamma_\kappa}(\{10\}, \{5\}) = \Gamma_{\kappa_3}$, where $\Gamma_{\kappa_2}$ and $\Gamma_{\kappa_3}$ are the following graphs:
\[\begin{tikzcd}[sep=tiny]
	& {\bullet_6} &&&&&&&& {\bullet_6} \\
	&&& {\bullet_5} &&&& {\bullet_{10}} &&&& {\bullet_5} &&&& {\bullet_{10}} \\
	{\bullet_7} &&& {\bullet_1} && {\bullet_9} &&& {\bullet_7} &&& {\bullet_1} && {\bullet_9} \\
	& {\bullet_8} & {\bullet_2} && {\bullet_4} &&&&& {\bullet_8} & {\bullet_2} && {\bullet_4} \\
	&&& {\bullet_3} &&&&&&&& {\bullet_3} \\
	&&& {\Gamma_{\kappa_2}} &&&&&&&& {\Gamma_{\kappa_3}}
	\arrow[curve={height=12pt}, from=3-4, to=4-3]
	\arrow[curve={height=12pt}, from=4-3, to=5-4]
	\arrow[curve={height=12pt}, from=5-4, to=4-5]
	\arrow[curve={height=12pt}, from=4-5, to=3-4]
	\arrow[from=1-2, to=3-4]
	\arrow[from=2-4, to=3-4]
	\arrow[from=4-2, to=4-3]
	\arrow[from=3-1, to=4-2]
	\arrow[from=3-6, to=4-5]
	\arrow[from=2-8, to=3-6]
	\arrow[from=1-10, to=3-12]
	\arrow[from=2-12, to=3-12]
	\arrow[curve={height=12pt}, from=3-12, to=4-11]
	\arrow[curve={height=12pt}, from=4-11, to=5-12]
	\arrow[curve={height=12pt}, from=5-12, to=4-13]
	\arrow[curve={height=12pt}, from=4-13, to=3-12]
	\arrow[from=4-10, to=4-11]
	\arrow[from=3-9, to=4-10]
	\arrow[from=3-14, to=4-13]
	\arrow[from=2-16, to=2-12]
\end{tikzcd}\]
\end{example}

\section{Non-invertible quasihomogeneous singularities}
Now we start with a map $\kappa$ and the systems of weights $(v_1,...,v_N,d)\in(\ZZ_{\ge 0}^{N+1})_d$, and construct by them quasihomogeneous polynomial $f_\kappa$ (may be degenerate) and consider $R=supp(f_\kappa)$. In this section our aim is to obtain the explicit non-degenerate polynomial $f$ such that $f = f_\kappa + f_{add}$ starting from the fixed $f_\kappa$.  
\begin{definition}
A (nonempty) subset of indexes $J \subset I = \{1,\dots, N\}$ is called \emph{failing set} for a given $R$, if it does not satisfy the condition $(C1)$ (or any other equivalent condition), for instance:

\begin{list}{}{}
\item[(NC1):]  
\quad $R\cap \ZZ_{\geq 0}^J = \varnothing$\\
\hspace*{1.6cm}and $\forall\ K\subset I\backslash J\text{ such that }|K|=|J|$\\
\hspace*{5cm}$\text{ it is true that }\exists\ k\in K\ R_k\cap \ZZ_{\geq 0}^J = \varnothing$
\end{list}
\end{definition}

It means also that $R$ satisfies the conditions of Lemma $\ref{t2.1}$ if there is no failing set for this $R$. Note that if there are failing sets for $R$ then $f_\kappa$ is degenerate by Theorem \ref{NDcy}. We want to describe the connection between failing sets of such $R$ and the map $\kappa$.
\begin{remark}
    Note that for a given graph $\Gamma_\kappa$ the set of powers $(a_1,\dots,a_N)$ defines the system of weights $(v_1,...,v_N,d)\in(\ZZ_{\ge 0}^N)_d$. We will use it in what follows, since sometimes it is easier to work with powers.
\end{remark}
\begin{proposition}
    Let $J$ be a failing set for $R = supp(f_\kappa)$. Then $\forall j \in \{1,2,\dots,N\}$, $\{j, \kappa(j)\} \nsubseteq J$. Moreover, the statement is also true for fixed points of $\kappa$: $\kappa(j) = j$.
\end{proposition}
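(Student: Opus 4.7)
My plan is to argue by contrapositive: I will assume that there exists an index $j \in I$ with $\{j,\kappa(j)\}\subseteq J$ and show that the set $J$ must then satisfy condition (C1), hence cannot be failing. The argument reduces to writing down the single monomial of $f_\kappa$ indexed by $j$ and reading off its exponent vector.

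The key point is the very explicit description of $R = \mathrm{supp}(f_\kappa)$ that comes from Proposition \ref{scaling}: since $f_\kappa = \sum_{i=1}^N x_i^{a_i}x_{\kappa(i)}$, the support is exactly
\begin{equation*}
R = \{\,\alpha^{(i)} := a_i e_i + e_{\kappa(i)} \;:\; i\in I\,\},
\end{equation*}
where by convention $\alpha^{(i)} = (a_i+1)e_i$ whenever $\kappa(i)=i$. In particular, the only nonzero coordinates of $\alpha^{(i)}$ are the $i$-th and the $\kappa(i)$-th.

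Now suppose $\{j,\kappa(j)\}\subseteq J$ for some fixed $j$. Then by the displayed formula $\alpha^{(j)} \in \ZZ_{\geq 0}^{\{j,\kappa(j)\}} \subseteq \ZZ_{\geq 0}^J$, so $\alpha^{(j)} \in R \cap \ZZ_{\geq 0}^J$ and this intersection is nonempty. But the very definition of a failing set (condition (NC1)) requires $R\cap \ZZ_{\geq 0}^J = \varnothing$ as its first clause, so $J$ cannot be failing, a contradiction. The fixed-point case $\kappa(j)=j$ is handled in exactly the same way: if $j\in J$ then $(a_j+1)e_j \in R\cap \ZZ_{\geq 0}^{\{j\}}\subseteq R\cap \ZZ_{\geq 0}^J$, again forcing the intersection to be nonempty.

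I do not expect a genuine obstacle here; the content of the statement is essentially a bookkeeping check, and the main thing to be careful about is only the convention for the monomial associated to a fixed point of $\kappa$ (so that the support formula $\alpha^{(i)}=a_i e_i + e_{\kappa(i)}$ is interpreted correctly). Once the support of $f_\kappa$ is written down explicitly, clause (a) of (C1) is witnessed directly by the monomial sitting on the arrow (or fixed vertex) inside $J$.
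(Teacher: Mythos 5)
Your argument is correct and coincides with the paper's own one-line proof: both observe that if $\{j,\kappa(j)\}\subseteq J$ then the exponent vector $a_je_j+e_{\kappa(j)}$ of the monomial $x_j^{a_j}x_{\kappa(j)}$ lies in $R\cap\ZZ_{\geq 0}^J$, so clause (a) of (C1) holds and $J$ cannot be failing. You merely spell out the support of $f_\kappa$ and the fixed-point convention more explicitly than the paper does.
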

\begin{proof}
    In this case $a_je_j+e_{\kappa(j)} \in R\cap \ZZ_{\geq 0}^J \neq \varnothing$ and by $(C1a)$ $J$ is not failing.
\end{proof}
Consider the example and find failing sets for some degenerate quasihomogeneous polynomial $f_\kappa$.

\begin{example}{\label{ea_f}}
    Let start with the map $\kappa:\{1,2,3,4\}\to \{1,2,3,4\}$ which corresponds to the graph
    \[\begin{tikzcd}
	{\bullet_1} && {\bullet_2} \\
	\\
	{\bullet_4} && {\bullet_3}
	\arrow[from=3-3, to=3-1]
	\arrow[from=1-1, to=3-1]
	\arrow[from=1-3, to=3-3]
\end{tikzcd}\]
and an arbitrary set of powers $(a_1,a_2,a_3,a_4)$ ($a_i \geq 2$). Then we have a polynomial $f_\kappa$ = $x_4^{a_4+1} + x_1^{a_1}x_4+x_3^{a_3}x_4+x_2^{a_2}x_3$ which is degenerate and consider $R = supp(f_\kappa)$. Let us write the matrix such that $R$ coincides with the set of it's rows:
\[
  \left[ {\begin{array}{cccc}
    0 & 0 & 0 & a_4+1 \\
    0 & 0 & a_3 & 1 \\
    0 & a_2 & 1 & 0 \\
     a_1 & 0 & 0 & 1 \\
  \end{array} } \right]
\]
We would like to find all failing sets for this $R$. Since $\kappa$ is a choice, then all $J \subset I$ with $|J|=1$ are not  failing sets. Therefore we consider the case $|J|=2$ (since by (C1)' we could consider only the sets $J$ with $|J| \leq \frac{5}{2} < 3) $. Notice that  $\forall j \in \{1,2,3,4\}$, $J \neq \{j, \kappa(j)\}$ by the proposition above.  We could also notice that if $4 \in J$, then $J$ is also not a failing set by $(C1a)$, since $(a_4+1)e_4 \in R\cap \ZZ_{\geq 0}^J \neq \varnothing$. It follows that the only possible failing set is $J = \{1,3\}$ (since for $J = \{1,2\}$ the set $K = \{3,4\}$ satisfies the condition (C1)). Actually $R\cap \ZZ_{\geq 0}^J = \varnothing$, and the only possible option remains $K = \{2,4\}$, for which the condition $R_1\cap \ZZ_{\geq 0}^J = R_3\cap \ZZ_{\geq 0}^J = \varnothing$ follows. Consequently, $J = \{1,3\}$ is a failing set for $R$ presented above.
\end{example}

We have to introduce the following definition to describe the construction of $f_{add}$:

\begin{definition}
    Let $(v_1,...,v_N,d)\in\ZZ_{\geq 0}^{N+1}$ be a system of weights, $R\subset (\ZZ_{\geq 0}^N)_d$ be a subset of a lattice and $F_R$ $\subset 2^I$ be a set of all failing sets for $R$. A collection of sets $A_R := \{J_1, \dots, J_l$\} with $J_i \subset I$ is called \emph{admissible} for $R$ if the following conditions hold:
    \begin{enumerate}
    \item $J_k$ is a failing set for $R$ for $1 \leq k \leq l$, i.e. $A_R \subset F_R$
    \item For any $J \in F_R$ there is $J_k \in A_R$ such that $J \backslash J_k$ is not failing.
\end{enumerate}
\end{definition}

Now we are ready to introduce the following theorem: 

\begin{theorem}\label{kappa-J}
    Let $A_R$ be an admissible collection of $R = supp(f_\kappa)$ and assume there is a set of multipowers $\{b_{J_K} \in (\ZZ_{\ge 0}^{J_K})^d | J_K \in R \;\text{and}\; b_s>0\; \text{for}\; s\in J_K\}$.
    
    Then, there is a set $\{\epsilon_{J_k} \in \CC^*\}$ such that the polynomial $f = f_\kappa + f_{add}$ is non-degenerate with
\[f_{add} = \sum_{J_k \in A_R} \varepsilon_{J_k}x_{k_1}^{b_{k_1}}x_{k_2}^{b_{k_2}} \dots x_{k_l}^{b_{k_l}} \]
\end{theorem}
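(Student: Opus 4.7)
The plan is to invoke Theorem~\ref{NDcy}(b) for the enlarged support
\[
R' := R \cup \{b_{J_k} \mid J_k \in A_R\} \subseteq (\ZZ_{\geq 0}^N)_d.
\]
Once $R'$ is shown to satisfy condition (C1) of Lemma~\ref{t2.1}, the implication (C1)$\Rightarrow$(IS2)$'$ guarantees that a generic quasihomogeneous polynomial with support contained in $R'$ is non-degenerate, so some choice of coefficients $\{\varepsilon_{J_k}\} \in (\CC^*)^l$ produces a non-degenerate $f = f_\kappa + f_{add}$. Thus the theorem reduces to the purely combinatorial claim that no nonempty $J \subseteq I$ is a failing set for $R'$.

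Since $R \subseteq R'$ and hence $R_s \subseteq R'_s$ for every $s \in I$, any $J \notin F_R$ remains non-failing with respect to $R'$ automatically. For $J \in F_R$ I would invoke the admissibility of $A_R$ to pick $J_k \in A_R$ with $J \setminus J_k$ not failing for $R$. The transparent subcase is $J_k \subseteq J$: here $b_{J_k}$ has support $J_k \subseteq J$, so $b_{J_k} \in R' \cap \ZZ_{\geq 0}^J$, verifying (C1a) for $J$ with respect to $R'$.

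The delicate subcase is $J_k \not\subseteq J$, where (C1a) need not hold and one targets (C1b) instead. Since $J$ is failing, $R \cap \ZZ_{\geq 0}^{J \setminus J_k} \subseteq R \cap \ZZ_{\geq 0}^J = \varnothing$, so the non-failing of $J \setminus J_k$ forces a witness $K' \subseteq (I \setminus J) \cup (J \cap J_k)$ with $|K'| = |J \setminus J_k|$ and $R_{s'} \cap \ZZ_{\geq 0}^{J \setminus J_k} \neq \varnothing$ for all $s' \in K'$. I would split $K' = K'_1 \sqcup K'_2$ with $K'_1 \subseteq I \setminus J$ and $K'_2 \subseteq J \cap J_k$, promote $K'_1$ directly (since $\ZZ_{\geq 0}^{J \setminus J_k} \subseteq \ZZ_{\geq 0}^J$), and replace $K'_2$ by indices in $J_k \setminus J$, exploiting that the new lattice point $b_{J_k} - e_s \in R'_s$ for $s \in J_k$ lies in $\ZZ_{\geq 0}^J$ when its support meets the right arithmetic constraints. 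The main obstacle is this combinatorial assembly: one must produce a set $K \subseteq I \setminus J$ of cardinality exactly $|J|$ with $R'_s \cap \ZZ_{\geq 0}^J \neq \varnothing$ for every $s \in K$. I expect the argument to lean on the equivalence (C1)$\Leftrightarrow$(C1)$'$ of Lemma~\ref{t2.1}, restricting attention to $|J| \leq (N+1)/2$ so that $I \setminus J$ is large enough to absorb the substitution, together with the full-support hypothesis $b_s > 0$ for $s \in J_k$ that ensures each new $b_{J_k} - e_s$ contributes nontrivially.
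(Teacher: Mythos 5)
Your overall strategy is the one the paper uses: it proves Lemma~\ref{R-f}, asserting that for an admissible $A_R$ the enlarged set $R' = R \cup \ZZ_{\geq 0}^{J_1} \cup \dots \cup \ZZ_{\geq 0}^{J_l}$ has no failing set, and then invokes (IS2)$'$ plus Proposition~\ref{scaling}. Your handling of $J \notin F_R$ and of the subcase $J_k \subseteq J$ matches the paper. The genuine gap is your ``delicate subcase'' $J_k \not\subseteq J$, which you sketch but do not close, and the sketch does not work as stated: replacing an index $s' \in K'_2 \subseteq J \cap J_k$ by some $s \in J_k \setminus J$ requires a witness in $R'_s \cap \ZZ_{\geq 0}^J$, and the only new candidate is $b_{J_k} - e_s$, whose support contains all of $J_k \setminus \{s\}$ because $b_t > 0$ for every $t \in J_k$; since $J_k \not\subseteq J$ this support is not contained in $J$ in general, so the witness fails to lie in $\ZZ_{\geq 0}^J$. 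The paper sidesteps this case entirely: its proof of Lemma~\ref{R-f} reads admissibility as providing $J_k \subseteq J$ (it writes $J = J_k \sqcup J_k^c$), which is exactly what the collections constructed in Theorem~\ref{c} satisfy, and then (C1a) applies directly. You should either adopt that reading, or prove that the case $J_k \not\subseteq J$ cannot occur, or supply a correct argument for it.

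A second, smaller omission: (IS2)$'$ yields non-degeneracy of a \emph{generic} polynomial supported in $R'$, i.e.\ with generic coefficients on all of $R'$ including the monomials of $f_\kappa$, whereas the theorem fixes the $f_\kappa$-part. A nonempty Zariski-open set of coefficient vectors does not a priori meet the affine slice where the coefficients of $x_j^{a_j}x_{\kappa(j)}$ all equal $1$. The paper closes this by Proposition~\ref{scaling}: a rescaling $x_j \mapsto \lambda_j x_j$ normalizes those $N$ coefficients to $1$, preserves non-degeneracy, and keeps the remaining coefficients nonzero, producing the required set $\{\varepsilon_{J_k}\}$. Your phrase ``some choice of coefficients produces a non-degenerate $f = f_\kappa + f_{add}$'' skips this normalization step.
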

Before proving of this theorem, let us introduce the example which illustrates the construction of $f_{add}$.
\begin{example}
    Let us turn back to Example \ref{ea_f} and put $(a_1,a_2,a_3,a_4) = (6,9,3,7)$ that define the system of weights $(v_1,v_2,v_3,v_4,d) = (9, 5, 18, 9, 63)$. The only failing set for $f_\kappa =x_4^{7} + x_1^{6}x_4+x_3^{3}x_4+x_2^{9}x_3$ is $J=\{1,3\}$. So the admissible collection is unique, and we have $A_R = \{J\}$. Note that $b_1 = 3$ and $b_3 = 2$ satisfy the condition $b_1v_1 + b_3v_3 = 63$. It follows from the theorem above that:
\[
f = f_\kappa + f_{add} = x_4^{7} + x_1^{6}x_4+x_3^{3}x_4+x_2^{9}x_3 + x_1^{3}x_3^{2}
\]
is non-degenerate what can be checked by direct computations (here we have $\varepsilon_J = 1$).
\end{example}

To prove the theorem we need to introduce the following lemma.

\begin{lemma}\label{R-f}
Let $A_R = \{J_1,\dots, J_l\}$ be admissible for $R$. Then for the set
 \begin{align}
R' := R \cup \ZZ_{\geq 0}^{J_1} \cup \ZZ_{\geq 0}^{J_2} \cup \dots \cup \ZZ_{\geq 0}^{J_l} \subset (\ZZ_{\geq 0}^N)_d
\end{align}
there is no any failing set.
 \end{lemma}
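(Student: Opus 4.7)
The plan is to argue by contradiction: assume some nonempty $J' \subseteq I$ is failing for $R'$ and derive a contradiction from the admissibility of $A_R$.

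First I would use monotonicity: since $R \subseteq R'$, any failure of (C1) for $J'$ in $R'$ implies the corresponding failure in $R$, so $J'$ is also failing for $R$, and admissibility produces $J_i \in A_R$ with $J' \setminus J_i$ not failing for $R$. Expanding the hypothesis that (C1a) fails for $J'$ in $R'$ gives
\[
\varnothing = R' \cap (\ZZ_{\geq 0}^{J'})_d = \bigl(R \cap (\ZZ_{\geq 0}^{J'})_d\bigr) \cup \bigcup_{k=1}^{l} (\ZZ_{\geq 0}^{J_k \cap J'})_d,
\]
so $(\ZZ_{\geq 0}^{J_k \cap J'})_d = \varnothing$ for every $k$. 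Since each $b_{J_k}$ has support exactly $J_k$ and sits in $(\ZZ_{\geq 0}^{J_k})_d$, this rules out $J_k \subseteq J'$ for every $k$; in particular $J_i \setminus J' \neq \varnothing$. Moreover, $J' \cap J_i \neq \varnothing$, for otherwise $J' \setminus J_i = J'$ would be not failing for $R$, contradicting the opening observation.

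Setting $J'' := J' \setminus J_i$ (a proper nonempty subset of $J'$ not failing for $R$), I would observe that (C1a) must fail for $J''$ in $R$ as well, since $\ZZ_{\geq 0}^{J''} \subseteq \ZZ_{\geq 0}^{J'}$ and the right-hand side intersected with $R$ is already empty. Hence (C1b) furnishes a set $K'' \subseteq I \setminus J''$ of size $|J''|$ with $R_k \cap \ZZ_{\geq 0}^{J''} \neq \varnothing$ for all $k \in K''$. To turn $K''$ into a witness of (C1b) for $J'$ in $R'$, I decompose $I \setminus J'' = (I \setminus J') \sqcup (J' \cap J_i)$ and split $K'' = K_1'' \sqcup K_2''$ accordingly, then form $K := K_1'' \cup L$ with $L \subseteq J_i \setminus J'$ of cardinality $|J'| - |K_1''|$. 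The indices $k \in K_1''$ inherit the witness property from $R \subseteq R'$ and $\ZZ_{\geq 0}^{J''} \subseteq \ZZ_{\geq 0}^{J'}$. For the appended indices $k \in L$, I use $R'_k \supseteq (\ZZ_{\geq 0}^{J_i})_{d - v_k}$ together with degree-$(d - v_k)$ monomials supported in $J_i \cap J'$ produced from $b_{J_i}$ by subtracting $e_k$ and reshuffling, so as to conclude $R'_k \cap \ZZ_{\geq 0}^{J'} \neq \varnothing$. This yields the sought witness of (C1b) for $J'$ in $R'$, contradicting the assumption that $J'$ is failing.

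The hard part will be the final extension step: simultaneously matching the cardinality $|L| = |J'| - |K_1''|$ with available indices in $J_i \setminus J'$ while guaranteeing $(\ZZ_{\geq 0}^{J_i \cap J'})_{d - v_k} \neq \varnothing$ for every appended $k$. The particularly thorny regime is $J' \subsetneq J_i$ with $(\ZZ_{\geq 0}^{J'})_d = \varnothing$, where no degree-$d$ monomial lives on $J'$ at all; there I expect to replace $J_i$ by another admissible element of $A_R$ better aligned with $J'$, or run a strong induction on $|J'|$. The base case of such an induction is secured by the choice structure of $\kappa$: for any singleton $\{j\}$, the summand $x_j^{a_j} x_{\kappa(j)}$ of $f_\kappa$ already places $(a_j - 1) e_j$ into $R_{\kappa(j)} \cap \ZZ_{\geq 0}^{\{j\}}$, so $K = \{\kappa(j)\}$ witnesses (C1b) and $\{j\}$ is never failing.
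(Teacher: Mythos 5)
Your argument has a genuine, and self-acknowledged, gap: the entire second half (splitting $K''=K_1''\sqcup K_2''$, appending $L\subseteq J_i\setminus J'$, and the ``thorny regime'' $J'\subsetneq J_i$ that you propose to handle by swapping $J_i$ or running an induction you do not carry out) is left open, and it is precisely there that the proof has to close. Note also that $J'':=J'\setminus J_i$ need not be nonempty (exactly in the regime $J'\subseteq J_i$ you flag), so the intermediate object your construction is built around can degenerate. The source of the difficulty is that you read the second admissibility condition literally as ``$J'\setminus J_i$ is not failing'' and then try to manufacture a (C1b) witness for $J'$ in $R'$. The paper never does this. Its proof of the lemma uses admissibility in the form ``every failing $J$ \emph{contains} some $J_k\in A_R$'' (writing $J=J_k\sqcup J_k^c$), which is exactly what the construction in Theorem \ref{c} produces; with that reading, case 2 is immediate via (C1a): since $J_k\subseteq J'$ and $(\ZZ_{\geq 0}^{J_k})_d\neq\varnothing$ (the multipower $b_{J_k}$ lives there), one has $R'\cap\ZZ_{\geq 0}^{J'}\supseteq(\ZZ_{\geq 0}^{J_k})_d\neq\varnothing$, so $J'$ is not failing for $R'$. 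No (C1b) witness is ever needed.

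The irony is that your own correct observation --- that failure of (C1a) for $J'$ in $R'$ forces $(\ZZ_{\geq 0}^{J_k\cap J'})_d=\varnothing$ and hence $J_k\not\subseteq J'$ for every $k$ --- is exactly the contradiction once the containment form of admissibility is in hand. Your first step (monotonicity: witnesses of (C1a)/(C1b) for $R$ persist for $R'$ since $R\subseteq R'$ and $R_k\subseteq R'_k$, so a set failing for $R'$ must already fail for $R$) does coincide with the paper's first case and is fine. But as written, your proof establishes the lemma only modulo the unproven extension step, whereas under the containment reading of admissibility the whole construction of $K$ is superfluous. If you insist on working only with the literal ``$J'\setminus J_i$ not failing'' hypothesis, you would first need to show that this implies the containment version (or prove the lemma is still true without it), neither of which your sketch accomplishes.
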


 \begin{proof} Suppose that $J \subset I$ is a failing set for $R'$ and is not a failing set for $R$. Since $R \subset R'$, then $(C2a)$ for $R$ can not be satisfied for $J$ (otherwise $J$ is not failing for $R'$). Therefore, $(C1b)$ for $R$ should be satisfied and $\exists$ $K \subset I$ such that $|K| = |J|$ and $\forall\; k \in K$ $\exists\ \alpha \in R_k\cap \ZZ_{\geq 0}^J$. But since $R_k \subset R'_k$ we also have $\alpha \in R'_k\cap \ZZ_{\geq 0}^J$. Consequently, $(C1b)$ is satisfied for $R'$ and $J$ is not a failing set for $R'$.

 Suppose now that $J$ is a failing set for $R$. By the definition of admissible collection $\exists\; J_k \in A_R$ such that $J = J_{k} \sqcup J_{k}^c$, where $J_{k}^c$ is a complementary set. It means that $\ZZ_{\geq 0}^{J_k} \neq \varnothing$ and in this case we have \[\ZZ_{\geq 0}^J \cap R' = \ZZ_{\geq 0}^J \cap (R \cup \dots \cup \ZZ_{\geq 0}^{J_k}) \supset \ZZ_{\geq 0}^{J_k} \neq \varnothing\]
 and from $(C1a)$ it follows that $J$ is not a failing set for $R'$ that completes the proof.
 \end{proof}

Now we prove Theorem \ref{kappa-J}.
\begin{proof}
We apply the lemma above to prove the theorem. Note that $R' = supp(f_\kappa) \cup supp(f_{add})$ for arbitrary non-zero coefficients in $f_{add}$. Due to the condition $\sum_{s \in J_k} b_{s}v_{s} = d$ and by $(IS 2)'$ we have the generic non-degenerate polynomial $f$ with the support equal to $R'$, i.e. $f$ is a linear combination of monomials from $f_\kappa$ and $f_{add}$ with some non-zero coefficients. By Proposition \ref{scaling} we assume that coefficients in $f_\kappa$ are equal to one and by these linear transformations we obtain the necessary set $\{\epsilon_{J_k} \in \CC^*\}$.
\end{proof}


\section{Admissible collections for the loop with branches}
In this Section we start with a polynomial $f_\kappa$ and present the method how to construct an admissible collection $A_R$ for $R=supp(f_\kappa)$. This gives us a recipe for taking any quasihomogeneous polynomial $f_\kappa$ from a loop with branches graph, and turning it into a non-degenerate quasihomogeneous polynomial.
Suppose we have a loop with branches graph on $n$ vertices (i.e. the length of the loop is equal to $n$).
For any $m \in \{1,\dots,n\}$ consider the set $S_m \subset \{1,\dots, N\}$ consisting of all vertices from which we have an arrow ending in $m$. Then we also define $A^m := \{ J_l \subset S_m \;|\; |J_l| \geq 2 \}$ to be the set of all subsets of $S_m$ with at least two elements. 
\begin{figure}[H]
    \centering
    \[\begin{tikzcd}
	&&&& \bullet & \textcolor{white}{\bullet} \\
	&&& {*} & {*} & \bullet & \textcolor{white}{\bullet} \\
	& \textcolor{white}{\bullet} & \bullet && {*} & \bullet & \textcolor{white}{\bullet} \\
	\textcolor{white}{\bullet} &&& {\bullet_m} && {*} & \bullet & \textcolor{white}{\bullet} \\
	& \bullet & {*} && {*} & \bullet & \textcolor{white}{\bullet}
	\arrow[from=2-4, to=4-4]
	\arrow[from=1-5, to=2-4]
	\arrow[from=2-5, to=4-4]
	\arrow[from=3-5, to=4-4]
	\arrow[from=2-6, to=2-5]
	\arrow[from=3-6, to=3-5]
	\arrow[from=4-6, to=4-4]
	\arrow[from=5-5, to=4-4]
	\arrow[from=5-6, to=5-5]
	\arrow[curve={height=6pt}, from=5-3, to=4-4]
	\arrow[curve={height=6pt}, from=4-4, to=3-3]
	\arrow[from=4-7, to=4-6]
	\arrow[dashed, from=5-7, to=5-6]
	\arrow[dashed, from=3-7, to=3-6]
	\arrow[dashed, from=2-7, to=2-6]
	\arrow[dashed, from=1-6, to=1-5]
	\arrow[dashed, from=4-8, to=4-7]
	\arrow[curve={height=6pt}, dashed, from=3-3, to=3-2]
	\arrow[curve={height=6pt}, from=5-2, to=5-3]
	\arrow[curve={height=6pt}, dashed, from=4-1, to=5-2]
\end{tikzcd}\]
    \caption{The elements of $S_m$ are marked as stars}
\end{figure}
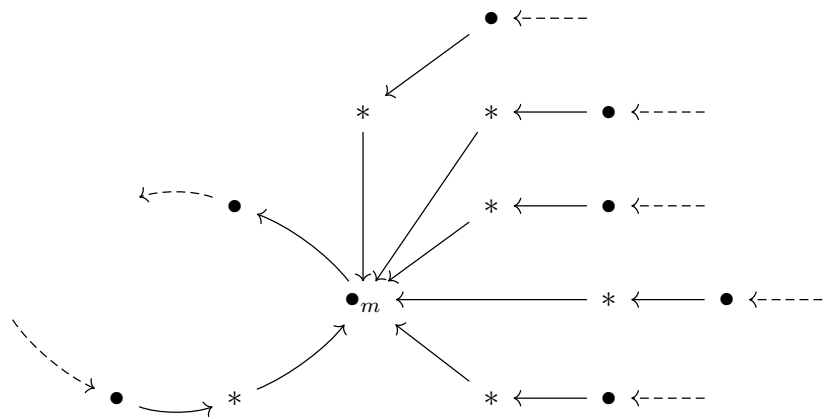
Now we are ready to formulate the following Theorem:
\begin{theorem}\label{c}
    The set $A_R = \cup_{m=1}^n A^m$ is an admissible collection for $R = supp(f_\kappa)$. 
\end{theorem}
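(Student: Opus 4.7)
The plan is to translate the condition that $J$ is a failing set into a clean property of $\kappa|_J$, and then read off both clauses of admissibility. Using $R=\{a_je_j+e_{\kappa(j)}\mid j\in I\}$, a direct check shows $R\cap\ZZ_{\geq 0}^J\neq\varnothing$ iff some $j\in J$ has $\kappa(j)\in J$, i.e.\ $\kappa(J)\cap J\neq\varnothing$ (with the fixed-point case $\kappa(j)=j$ included). Similarly, for $k\in I\setminus J$, the set $R_k$ consists of the points $a_j e_j$ for $j\in\kappa^{-1}(k)$ together with $(a_k-1)e_k+e_{\kappa(k)}$; the latter has a positive $e_k$-component and hence cannot lie in $\ZZ_{\geq 0}^J$ when $k\notin J$, giving $R_k\cap\ZZ_{\geq 0}^J\neq\varnothing \Leftrightarrow k\in\kappa(J)$. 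Feeding this into Lemma~\ref{t2.1} yields the clean criterion
\[
J\ \text{is failing}\ \Longleftrightarrow\ \kappa(J)\cap J=\varnothing\ \text{and}\ \kappa|_J\ \text{is not injective.}
\]

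From here, both admissibility clauses are immediate. For clause (1), take $J\in A^m$: one has $J\subseteq S_m$, $|J|\geq 2$, and since self-loops are not drawn, $m\notin S_m\supseteq J$; hence $\kappa(J)=\{m\}$ is disjoint from $J$ and has cardinality $1<|J|$, so the criterion confirms that $J$ is failing. For clause (2), let $J$ be any failing set. By the criterion, choose $j_1\neq j_2$ in $J$ with $\kappa(j_1)=\kappa(j_2)=:m^*$; since $m^*\in\kappa(J)$ and $\kappa(J)\cap J=\varnothing$, we have $m^*\notin J$, so $\{j_1,j_2\}\subseteq S_{m^*}\cap J$. Setting $J_k:=S_{m^*}\cap J$ gives a subset of $S_{m^*}$ of cardinality at least two that is also contained in $J$; thus $J_k\in A^{m^*}\subseteq A_R$ with $J_k\subseteq J$, which is precisely the containment used in the proof of Lemma~\ref{R-f} to deduce that $J$ is not failing for the enlarged support $R'$.

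The main obstacle is obtaining the failing-set characterisation cleanly. One has to separate the two contributions to $R_k$ and argue that the ``self-shift'' term $(a_k-1)e_k+e_{\kappa(k)}$ cannot meet $\ZZ_{\geq 0}^J$ when $k\notin J$, and one must handle the edge case $|J|>N/2$, where $(C1b)$ is vacuously false and the non-injectivity of $\kappa|_J$ must instead be deduced from $|\kappa(J)|\leq |I\setminus J|<|J|$. Once these points are settled, the combinatorial choice of the pair $(m^*,J_k)$ falls out immediately.
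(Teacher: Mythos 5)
Your argument is correct and, at its core, it is the same as the paper's; the difference is one of packaging. You first isolate the clean criterion that $J$ is failing for $R=supp(f_\kappa)$ if and only if $\kappa(J)\cap J=\varnothing$ and $\kappa|_J$ is not injective, and only then read off both clauses of admissibility. The paper never states this criterion: for clause (1) it argues directly that an $\alpha\in R_k\cap\ZZ_{\geq 0}^{J_l}$ would force either an arrow inside $J_l$ or $|K|=1$, and for clause (2) it decomposes $J=U_J^1\sqcup\dots\sqcup U_J^n\sqcup U_J^c$ and asserts that some $|U_J^s|\geq 2$. Your route buys an explicit treatment of the case $|J|>N/2$, where $(C1b)$ is vacuous (the paper does not discuss it), and makes clause (1) a one-line check. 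Two caveats, both of which you share with the paper rather than introduce yourself. First, what you actually prove for clause (2) is the containment $J_k\subseteq J$ for some $J_k\in A_R$, not the literal condition ``$J\setminus J_k$ is not failing'' from the definition of admissible collection; this containment is what the proof of Lemma~\ref{R-f} really uses, so you are consistent with the paper, but the mismatch with the stated definition is worth noting. Second, your step $A^{m^*}\subseteq A_R$ tacitly assumes that the collision vertex $m^*=\kappa(j_1)=\kappa(j_2)$ lies on the loop, since $A^m$ is only defined, and only included in $A_R$, for $m\in\{1,\dots,n\}$; this is precisely the point where the paper asserts $|U_J^s|\geq 2$ for a loop index $s$. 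The assumption holds because in a loop-with-branches graph as depicted in Figure 1 the branches are chains, so every vertex of in-degree at least two is a loop vertex; if internal branching off the loop were allowed, two branch vertices feeding a common non-loop vertex would form a failing set containing no member of $A_R$, and both your proof and the paper's would break at this step.
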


\begin{proof}
Firstly, we are going to show that any $J_l \in A_R$ is a failing set for $R$. 
If it is not, then by (C1) there is the set $K \subset I$ such that $|K| = |J_l|$ and for all $k \in K$ there is $\alpha_k \in R_k\; \cap\; \ZZ_{\geq 0}^{J_l}$ (since $R \cap\; \ZZ_{\geq 0}^{J_l} = 0)$. It means, that for all $k\in K$ we have $\alpha_k + e_k \in R$ and $\alpha_k \in \ZZ_{\geq 0}^{J_l}$, but $R = supp(f_\kappa)$ consists of elements of $\ZZ_{\geq 0}^{N}$ with only two non-zero coordinates (and one of them is equal to one). We conclude that $\alpha_k$ should have two non-zero coordinates (with one of them equal to one) or only one non-zero coordinate. The first case is impossible since it means the arrow between some $j_1, j_2 \in J_l$, and the second case is also impossible since it means that $K = \{k\}$, where $\kappa(j_1) = \kappa(j_2) = \dots = k$ is the unique set satisfies the necessary condition (C1) and $|K| = 1 < |J_l|$ . From this we can conclude that any  $J_k \in A^m \subset A_R$ is a failing set for $R$ for any $m$.

Now we want to show that for any failing set $J$ there is $J_l \in A_R$ such that $J_l \subset J$. From above we conclude that any failing set for $R$ has a decomposition $J = U^1_J \sqcup U^2_J \sqcup \dots U^n_J \sqcup U_J^c$, where $U^m_J$ consists all indices from $I$ whose image under $\kappa$ is equal to $m \in \{1,\dots,n\}$ and $U_J^c$ is the complementary set. Since $J$ is failing there is $s$ such that $|U^s_J| \geq 2$. It means, that there is $J_l \in A_R$ such that $U^s_J = J_l$. Consequently,  $J_l \subset J$. We show that for any failing set $J$ there is a set $J_k \in A_R$ such that $J_l \subset J$ what gives us that the collection $A_R$ is admissible for $R$.
\end{proof}

\begin{corollary}
    Let $A_R$ be a collection as above. If $f_{add}$ is from Theorem \ref{kappa-J}, then $f = f_\kappa + f_{add}$ is non-degenerate.
\end{corollary}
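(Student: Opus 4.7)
The approach is a direct composition of the two theorems just proved. First, by Theorem~\ref{c}, the collection $A_R = \bigcup_{m=1}^n A^m$ is admissible for $R = \mathrm{supp}(f_\kappa)$; each $J_l \in A^m$ consists of at least two vertices all mapping to the same node $m$ under $\kappa$, so $|J_l| \geq 2$ and the hypotheses on the input collection of Theorem~\ref{kappa-J} are satisfied by this very $A_R$. Second, by the hypothesis of the corollary, $f_{add}$ is the polynomial produced by the recipe of Theorem~\ref{kappa-J} from an admissible collection, which in particular presumes the existence of the multipower data $\{b_{J_k} \in (\ZZ_{\ge 0}^{J_k})^d\}$ with $b_s > 0$ for $s \in J_k$ and $\sum_{s \in J_k} b_s v_s = d$. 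Granted these, Theorem~\ref{kappa-J} delivers a set $\{\epsilon_{J_k}\} \subset \CC^*$ for which $f = f_\kappa + f_{add}$ is non-degenerate, which is exactly the assertion.

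So the proof plan is: cite Theorem~\ref{c} to verify admissibility of $A_R = \bigcup_m A^m$, then invoke Theorem~\ref{kappa-J}. I expect no serious obstacle. The only item meriting a line of care is matching the formats of the two statements, since Theorem~\ref{kappa-J} was written for an abstract admissible $A_R$ while Theorem~\ref{c} produces a very concrete one; but because the hypotheses of Theorem~\ref{kappa-J} concern only admissibility and the existence of multipowers, and not any further structural property of $A_R$, no additional verification is needed. The corollary thus serves as a bookkeeping statement that assembles the two preceding theorems into the promised explicit recipe: starting from any loop-with-branches graph $\Gamma_\kappa$, adjoining one monomial $\varepsilon_{J_l} \prod_{s \in J_l} x_s^{b_s}$ for each $J_l \subset S_m$ of size at least two, at every node $m$ of the graph, produces a non-degenerate quasihomogeneous polynomial.
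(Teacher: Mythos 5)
Your proposal is correct and matches the paper's intent: the corollary is exactly the assembly of Theorem~\ref{c} (admissibility of $A_R=\bigcup_m A^m$) with Theorem~\ref{kappa-J}, and the paper's own one-line proof (citing the non-degeneracy criterion, Theorem~\ref{NDcy}, on which Theorem~\ref{kappa-J} ultimately rests) is just a terser version of the same argument. Your added remark that the existence of the multipower data is part of the hypothesis ``$f_{add}$ is from Theorem~\ref{kappa-J}'' is a reasonable and accurate reading.
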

\begin{proof}
    Follows by Theorem \ref{NDcy}.
\end{proof}
\begin{example}
  Consider the map $\kappa$ which defines the following graph:
\[\begin{tikzcd}[sep = small]
	{\bullet_4} & {\bullet_5} && {\bullet_6} \\
	&& {\bullet_1} \\
	\\
	& {\bullet_2} && {\bullet_3}
	\arrow[curve={height=18pt}, from=2-3, to=4-2]
	\arrow[curve={height=18pt}, from=4-2, to=4-4]
	\arrow[curve={height=18pt}, from=4-4, to=2-3]
	\arrow[from=1-1, to=1-2]
	\arrow[curve={height=-6pt}, from=1-2, to=2-3]
	\arrow[curve={height=6pt}, from=1-4, to=2-3]
\end{tikzcd}\]
and the set of powers $(a_1,a_2,a_3,a_4,a_5,a_6) = (3,2,4,2,3,4)$ that defines the weight system $(v_1,v_2,v_3,v_4,v_5,v_6,d) = (1,2,1,1,2,1,5)$, i.e. the polynomial defined by $\kappa$ has the form
\begin{align*}
  f_\kappa = x_1^3x_2+x_2^2x_3+x_3^4x_1+x_5^2x_1+x_4^3x_5+x_6^4x_1  
\end{align*}
and $R = supp(f_\kappa)$. By theorem above, the admissible collection $A_R$ has the form $A_R = A^1 = \{\{3,5\}, \{3,6\}, \{5,6\}, \{3,5,6\}\}$. Thus the set of corresponding $\varepsilon_{J_k}$ exists and we take:
\[
f_{add} = \varepsilon_{3,5}x_3x_5^2 + \varepsilon_{3,6}x_3^2x_6^3+\varepsilon_{5,6}x_5^2x_6 + \varepsilon_{3,5,6}x_3^2x_5x_6\]
\[
f = f_\kappa + f_{add} = x_1^3x_2+x_2^2x_3+x_3^4x_1+x_5^2x_1+x_4^3x_5+x_6^4x_1 + \varepsilon_{3,5}x_3x_5^2 + \varepsilon_{3,6}x_3^2x_6^3+\varepsilon_{5,6}x_5^2x_6 + \varepsilon_{3,5,6}x_3^2x_5x_6
\]
where by Theorem \ref{kappa-J} the polynomial $f$ is non-degenerate what could be verified by exact calculation (exactly at that case we can put $\varepsilon_{J_k} = 1$ for all $J_l \in A_R$).
\end{example}

\section{Crepant resolutions and orbifold equivalence}\label{CR and OE}
In this Section we are going to consider Landau-Ginzburg orbifold $(f, G)$, where quasihomogeneous polynomial $f$ corresponds to loop with branches and 1 isolated vertex and the group $G$ is isomorphic to $\ZZ/2\ZZ$. 
We describe a new polynomial $\bar{f}$ which is obtained from $f$ using the gluing a point operation and introduce the orbifold equivalence $(\bar{f}, \{\id\}) \sim (f, G)$. 
\subsection{Background}

Let $f\colon \CC^N \to \CC$ be a holomorphic $G$-invariant function. Thus it could be defined on $\CC^N/G$. Let $\tau\colon \widehat{\CC^N/G} \to \CC^N/G$ be a crepant resolution of the
singularity. The map $f$ then lifts to a function on $\widehat{\CC^N/G}$ by taking composition with $\tau$ and we obtain a function $\hat{f}\colon \widehat{\CC^N/G}\to \CC$. Let $\widehat{\CC^N/G}$ be covered by some charts $U_1, \dots, U_s$ all isomorphic to $\CC^N$. We denote $\hat{f}_i$ as a restriction of $\hat{f}$ on the each chart 
\[\hat{f}_i = \hat{f}|_{U_i}\colon U_i \to \CC\]

If we consider a non-degenerate quasihomogeneous polynomial $f \in \CC[\textbf{x}]$ and a group of symmetries $G$, we can associate with the pair $(f,G)$ the category $\mathrm{MF}_G(f)$ (we refer to \cite{Orl1}, \cite{Orl2}, \cite{Io} for details). In the category language we formulate the definition of orbifold equivalence (see \cite{Io}) and have the following theorem:
\begin{theorem}\label{equiv}(\cite{BP}, \cite{Io})
    Let $f$ and $\hat{f}$ both have isolated singularities at the unique points $v \in \CC^N/G$ and $w \in \widehat{\CC^N/G}$ respectively. Then the pairs $(f,G)$ and $(\hat{f},\{\id\})$ are orbifold equivalent. Namely there is an equivalence of categories
$$\mathrm{MF}(\hat{f}) \cong\mathrm{MF}_G(f)$$

Moreover, suppose that $\hat{f}$ has a singularity at the origin and there is an affine chart $U_i$ such that $\hat{f}_i\colon \CC^N \cong U_i \to \CC$ has a singularity at $0$ and $\hat{f}_j\colon \CC^N \cong U_j\to \CC$ does not have any singularities $\forall j\neq i$ . Then:
$$\mathrm{MF}(\hat{f}) \cong \mathrm{MF}(\hat{f}_i)$$
\end{theorem}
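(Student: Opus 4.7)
The plan is to combine two main ingredients: the derived McKay correspondence relating $D^b(\widehat{\CC^N/G})$ with $D^b_G(\CC^N)$, and Orlov's theorem identifying categories of matrix factorizations with singularity categories of the corresponding hypersurfaces. The first ingredient will produce the equivalence $\mathrm{MF}(\hat{f}) \cong \mathrm{MF}_G(f)$, while the \emph{moreover} statement will follow by exploiting the local nature of matrix factorizations with respect to the critical locus.

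For the first assertion I would invoke the Bridgeland--King--Reid theorem (and its higher-dimensional extensions, applicable because $\widehat{\CC^N/G} \to \CC^N/G$ is crepant), which yields an equivalence $D^b(\widehat{\CC^N/G}) \cong D^b_G(\CC^N)$. Since $\hat{f} = f\circ \tau$ is the pullback of the $G$-invariant function $f$ along $\tau$, this equivalence intertwines the two Landau--Ginzburg potentials. Then, by Orlov's theorem, $\mathrm{MF}(\hat{f})$ is equivalent to the singularity category of the hypersurface $\{\hat{f} = 0\}$ and $\mathrm{MF}_G(f)$ to the $G$-equivariant singularity category of $\{f=0\}$. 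Because $\tau$ is an isomorphism away from the singular loci $v$ and $w$, and both potentials have their only singularities at these points, the derived equivalence descends to an equivalence on singularity categories, producing $\mathrm{MF}(\hat{f}) \cong \mathrm{MF}_G(f)$.

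For the \emph{moreover} statement I would use the fact that the category of matrix factorizations of a function depends only on a (formal) neighborhood of its critical locus: if $h\colon V\to \CC$ is smooth on an open subset $V$, then $\mathrm{MF}(h|_V)$ is contractible (equivalent to the zero category up to homotopy). By hypothesis all critical points of $\hat{f}$ lie in the chart $U_i$, while $\hat{f}_j$ is smooth on $U_j$ for $j\neq i$. A Zariski-descent / Mayer--Vietoris argument for matrix factorizations, applied to the cover $\{U_1,\dots,U_s\}$, then collapses the total category to the one nontrivial chart, giving $\mathrm{MF}(\hat{f}) \cong \mathrm{MF}(\hat{f}_i)$.

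The main obstacle I would expect is ensuring that the derived McKay equivalence is fully compatible with the Landau--Ginzburg structure at the level of matrix factorizations rather than merely coherent sheaves: one must track the $\ZZ$-grading required by Orlov's theorem in the quasihomogeneous setting, verify that the $G$-action on $f$ is correctly matched with the equivariant structure on the resolution side, and make precise the Zariski-descent statement for $\mathrm{MF}$ along an arbitrary open cover. These are exactly the technical points addressed in \cite{BP} and \cite{Io}, which I would cite rather than reproduce in full detail.
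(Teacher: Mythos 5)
This theorem is quoted by the paper from \cite{BP} and \cite{Io} without any proof of its own, so there is nothing internal to compare against; your outline — derived McKay correspondence intertwined with the pulled-back potential, Orlov's identification of $\mathrm{MF}$ with singularity categories, and the fact that singularity categories localize onto the critical locus so that the smooth charts contribute nothing — is exactly the strategy of those references. The only point I would flag is that invoking ``higher-dimensional extensions'' of Bridgeland--King--Reid is doing real work: in the situation actually used later in the paper, $G\cong\ZZ/2\ZZ$ acts by $-1$ on just two coordinates, so the quotient is $\CC^{N-1}$ times an $A_1$ surface singularity and the surface case of the McKay equivalence suffices, which is the form in which \cite{Io} applies it.
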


It means that for orbifold equivalence between $(f, G)$ and $(\bar{f}, \{\id\})$ it is sufficient to find the suitable affine chart such that $\bar{f} = \hat{f}_m$ and $\hat{f}_m$ is the unique non-degenerate polynomial in the collection $\{\hat{f}_i\}$.

\subsection{Orbifold equivalence for the loop with branches}
Now we start with a polynomial $f_{\kappa_0}$ with one connected component, and assume that there is an index $t \in T_{\kappa_0}$ (i.e. the set of leaves) such that the corresponding power is even, i.e. there is a monomial $x_t^{2a_t}x_{\kappa(t)}$. Without loss of generality we put $t=1$ and $\kappa(1)=2$.

By Theorem \ref{c} we obtain $f_0 = f_{\kappa_0} + f_{add}$, where $f_{add}$ is given by the admissible collection for $R = supp(f_{\kappa_0})$. Then we consider the polynomial $f_\kappa = f_{\kappa_0} + x_{N+1}^2$ with the set of powers $(2a_1, \dots, a_N, 2)$ which is quasihomogeneous with the reduced set of weights $(q_1,\dots,q_N, 1/2)$. Note that the corresponding graph $\Gamma_\kappa$ is a disjoint union of $\Gamma_{\kappa_0}$ and 1 isolated vertex. Since $x_{N+1}^2$ is invertible, the admissible collection $A_R$ for $R = supp(f_\kappa)$ coincides with the admissible collection for $R = supp(f_{\kappa_0})$. It follows that $f = f_\kappa + f_{add}$ is non-degenerate by Theorem \ref{c} (with the same $f_{add}$ as for $f_0$).

Our object of research is Landau-Ginzburg orbifold ($f, G)$ with the symmetry group $G\cong \ZZ/2\ZZ \cong \langle g\rangle$ acting as follows:
\[g\cdot x_m = - x_m \;\text{if}\; m = 1;N+1\] 
\[g\cdot x_{m} =  x_{m} \; \text{else} \]
It means that $g$ acts non-trivially only on isolated vertex and on the leaf with even power.

For the group $G$ described above we have $\CC^{N+1}/G \cong \CC^{N-1} \times \{w^2 = uv\}\subset \CC^{N+1}$ by identifying$\{u = x_1^2, v = x_{N+1}^2, w = x_1x_{N+1}\}$ or $\{v = x_1^2, u = x_{N+1}^2, w = x_1x_{N+1}\}$ which gives us $2$ charts $U_1$ and $U_2$ covering $\CC^{N+1}/G$, where for $U_1$:
\[(x_{2}, \dots, x_{N}, y, z) \longrightarrow (x_{2}, \dots, x_{N}, y,  yz^2)\]
and for $U_2$:
\[(x_{2}, \dots, x_{N}, y, z) \longrightarrow (x_{2}, \dots, x_{N}, y^2z, z)\]

Now we are ready to formulate the following theorem:
\begin{theorem}\label{main}
    Let Landau-Ginzburg orbifold $(f,G)$ be as above. Then there is a non-degenerate quasihomogeneous polynomial $\bar{f}$ with the following properties:
    \begin{enumerate}
        \item $\bar{f}$ has a reduced system of weights $(2q_1, q_2, \dots, q_N, 1/2 - q_1)$
        \item  $ \bar{f} = f_{\bar{\kappa}} + \bar{f}_{add}$ with the graph $\Gamma_{\bar{\kappa}} = \circ_{\Gamma_\kappa}(N+1, 1)$ and $\bar{f}_{add}= f_{add}(t_1, \dots, t_N)$ where we put $t_1^2 = x_1$ and $t_m=x_m$ for $m \neq 1$;
    \end{enumerate}
such that $(f, G)$ and $(\bar{f}, \{\id\})$ are orbifold equivalent. In particular there is an equivalence of categories
$$\mathrm{MF}(\bar{f}) \cong\mathrm{MF}_G(f).$$
\end{theorem}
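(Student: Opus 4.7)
The plan is to apply Theorem \ref{equiv} by exhibiting a crepant resolution $\tau \colon \widehat{\CC^{N+1}/G} \to \CC^{N+1}/G$ on which the pulled-back polynomial $\hat{f} = f \circ \tau$ has a unique isolated critical point lying in a single chart, and then to identify that chart with $\bar{f}$. Since $G$ acts non-trivially only on $x_1$ and $x_{N+1}$, one has $\CC^{N+1}/G \cong \CC^{N-1} \times \{w^2 = uv\}$ with $u = x_1^2$, $v = x_{N+1}^2$, $w = x_1 x_{N+1}$, and the crepant resolution is the blow-up of the singular locus $\CC^{N-1} \times \{0\}$ covered by the two affine charts $U_1, U_2$ given in the text. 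The Landau-Ginzburg orbifold hypothesis forces $f$ to be $G$-invariant, so each monomial of $f$ has even total degree in $\{x_1, x_{N+1}\}$; in particular every monomial of $f_{add}$ has an even power of $x_1$, and the substitution $t_1^2 = x_1$ therefore produces an honest polynomial $\bar{f}_{add}$. A monomial-by-monomial weight check confirms that $\bar{f}$ is quasihomogeneous with the reduced system $(2q_1, q_2, \dots, q_N, 1/2 - q_1)$: the monomial $x_1^{a_1} x_2$ has weight $2 a_1 q_1 + q_2 = 1$, the new monomial $x_{N+1}^2 x_1$ has weight $2(1/2 - q_1) + 2q_1 = 1$, the remaining monomials of $f_{\bar{\kappa}}$ are unchanged, and each monomial of $\bar{f}_{add}$ has weight $(b_1/2)(2q_1) + \sum_{m \neq 1} b_m q_m = 1$.

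The core computation is the pullback of $f$ to the two charts. On $U_1$ the invariants pull back as $u = y$, $v = y z^2$, $w = yz$, so $x_1^{2a_1} x_2 \mapsto y^{a_1} x_2$, $x_{N+1}^2 \mapsto y z^2$, the monomials of $f_{\kappa_0}$ not involving $x_1$ are unchanged, and each monomial $x_1^{b_1} \prod_{m > 1} x_m^{b_m}$ of $f_{add}$ becomes $y^{b_1/2} \prod_{m > 1} x_m^{b_m}$; collecting these one obtains
\begin{equation*}
\hat{f}|_{U_1} = y^{a_1} x_2 + \sum_{j \neq 1} x_j^{a_j} x_{\kappa(j)} + y z^2 + \bar{f}_{add}(y, x_2, \dots, x_N),
\end{equation*}
which coincides with the explicit formula for $\bar{f}$ under the identifications $y \leftrightarrow x_1$ and $z \leftrightarrow x_{N+1}$. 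The analogous computation on $U_2$ (with $u = y^2 z$, $v = z$, $w = yz$) yields
\begin{equation*}
\hat{f}|_{U_2} = y^{2a_1} z^{a_1} x_2 + \sum_{j \neq 1} x_j^{a_j} x_{\kappa(j)} + z + \bar{f}_{add}(y^2 z, x_2, \dots, x_N).
\end{equation*}

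It remains to check that $\hat{f}$ has a unique isolated critical point, lying in $U_1$. Since $f$ has its only critical point at the origin of $\CC^{N+1}$, outside the exceptional divisor $E$ the map $\tau$ is an isomorphism and so $\hat{f}$ has no critical points there. Setting $y = 0$ in $\hat{f}|_{U_1}$ (respectively $z = 0$ in $\hat{f}|_{U_2}$) produces the same polynomial $F(x_2, \dots, x_N) := \sum_{j \neq 1} x_j^{a_j} x_{\kappa(j)} + \bar{f}_{add}(0, x_2, \dots, x_N) = f|_{x_1 = x_{N+1} = 0}$, which is non-degenerate by Proposition 5 of \cite{ET1} (restriction to a fixed locus of a symmetry), so the equations $\partial \hat{f}|_{U_1}/\partial x_m = 0$ at $y = 0$ and $\partial \hat{f}|_{U_2}/\partial x_m = 0$ at $z = 0$ both reduce to $\partial F/\partial x_m = 0$ and therefore force $x_2 = \dots = x_N = 0$. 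On the resulting line $\partial \hat{f}|_{U_1}/\partial y$ equals $z^2$, which forces $z = 0$ and leaves only the origin of $U_1$; while $\partial \hat{f}|_{U_2}/\partial z$ equals the constant $1$, the other contributions vanishing because of the $x_m$-factors, the factor $z^{a_1 - 1}$, and the fact that every monomial of $\bar{f}_{add}$ has total degree $\geq 2$. Hence $\hat{f}|_{U_2}$ has no critical points and $\bar{f} = \hat{f}|_{U_1}$ has a unique isolated critical point at the origin, proving non-degeneracy of $\bar{f}$. Applying the main and the moreover parts of Theorem \ref{equiv} then yields $\mathrm{MF}(\hat{f}) \cong \mathrm{MF}_G(f)$ and $\mathrm{MF}(\hat{f}) \cong \mathrm{MF}(\bar{f})$, establishing the orbifold equivalence $(f, G) \sim (\bar{f}, \{\id\})$. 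The main subtlety is the analysis of the partials on the exceptional divisor, which is resolved cleanly by invoking the non-degeneracy of $f|_{x_1 = x_{N+1} = 0}$.
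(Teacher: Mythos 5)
Your proposal is correct and follows essentially the same route as the paper: pull $f$ back along the crepant resolution of $\CC^{N+1}/G$, identify $\bar{f}$ with the restriction to the chart $U_1$, verify quasihomogeneity with the stated weights, show the second chart carries no critical points, and invoke Theorem \ref{equiv}. The only (minor) difference is in the critical-point bookkeeping: you split the analysis by on/off the exceptional divisor and use the non-degeneracy of $f^g = f|_{x_1=x_{N+1}=0}$ to force the branch variables to vanish, whereas the paper argues chart by chart directly on the partial derivatives (using the $+1$ term in $\partial \hat{f}_2/\partial x_{N+1}$ for $U_2$ and a change-of-variables argument for the non-degeneracy of $\hat{f}_1$); your organization is, if anything, slightly more careful on the latter point.
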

\begin{example}
    Let us consider the example starting with a polynomial $f_{\kappa_0} = x_1^3 + x_2^4x_1 + x_3^8x_1$. Then we have $f = x_1^3 + x_2^4x_1 + x_3^8x_1+x_2^4x_3^4 + x_4^2$ with $f_{add} = x_2^4x_3^4$, the reduced system of weights $(1/3,1/6,1/12,1/2)$ and the graph
\[\begin{tikzcd}[sep=small]
	&& {\bullet_2} \\
	&&&&& {\bullet_4} \\
	\\
	{\bullet_1} &&& {\bullet_3}
	\arrow[from=1-3, to=4-1]
	\arrow[from=4-4, to=4-1]
\end{tikzcd}\]
Consider the group $G \cong \ZZ/2\ZZ \cong \langle g \rangle$ where $g(x_1,x_2,x_3,x_4) =(x_1,-x_2, x_3, -x_4)$ and thus we obtain $\bar{f} = x_1^3 + x_2^2x_1 + x_3^8x_1+x_2^2x_3^4 +x_4^2x_3$ with $\bar{f}_{add} =x_2^2x_4^4$, the reduced system of weights $(1/3,1/3,1/12,1/3)$ and the graph 
\[\begin{tikzcd}[sep=small]
	&& {\bullet_2} \\
	&&&&& {\bullet_4} \\
	\\
	{\bullet_1} &&& {\bullet_3}
	\arrow[from=1-3, to=4-1]
	\arrow[from=4-4, to=4-1]
	\arrow[from=2-6, to=1-3]
\end{tikzcd}\]
such that there is an orbifold equivalence $(f, \ZZ/2\ZZ) \sim (\bar{f}, \{\id\})$.
\end{example}
\section{Proof of Theorem \ref{main}}
We are going to show that $\bar{f}$ described in Theorem \ref{main} coincides with $\hat{f}_1$ obtained by crepant resolution. Firstly we prove that $\hat{f}_1$ is non-degenerate (i.e. it is quasihomogeneous and defines an isolated singularity at the origin) and has the properties described in Theorem \ref{main}. Secondly we show that $\hat{f}_2$ does not have any singularities. Then by Theorem \ref{equiv} the corresponded Landau-Ginzburg orbifolds $(f,G)$ and $(\hat{f_1}, \{\id\})$ will be orbifold equivalent that completes the proof. 

Let us rewrite $\hat{f}_{1}$ in the explicit form. Following decomposition for $f$ we have  $\hat{f}_{1}= (\hat{f}_\kappa)_{1} + (\hat{f}_{add})_{1}$. Recall that $f_0$ is a polynomial with loop with branches graph $\kappa_0$ described in Section 6. Then 
\begin{align}\label{exform}
(\hat{f}_\kappa)_{1} = f_{\kappa_0}(y, x_{2}, \dots,x_N) + z^2y
\end{align}

Since $G \cong \ZZ/2\ZZ$ is a group of symmetries, then the variable $x_1$ in each term of $f_{add}$ has an even powers. Namely, we have the explicit form of $f_{add}$: 
\begin{align*}
f_{add} = \sum_{\{J_r \in A_R\;|\;
1 \notin J_r \}} \varepsilon_{J_r}x_{s_1}^{b_{s_1}}x_{s_2}^{b_{s_2}} \dots x_{s_l}^{b_{s_l}} + \sum_{\{J_r \in A_R\;|\;
1 \in J_r\}} \varepsilon_{J_r}x_{1}^{2b_1}x_{s_2}^{b_{s_2}} \dots x_{s_l}^{b_{s_l}}
\end{align*}
for the admissible collection $A_R$ and $\varepsilon_{J_r} \in \CC^*$. And after the change of variables we obtain:
\begin{align*}
f_{add} =& \sum_{\{J_r \in A_R\;|\;
1 \notin J_r\}} \varepsilon_{J_r}x_{s_1}^{b_{s_1}}x_{s_2}^{b_{s_2}} \dots x_{s_l}^{b_{s_l}} + \sum_{\{J_r \in A_R\;|\;
1 \in J_r\}} \varepsilon_{J_r}y^{b_1}x_{s_2}^{b_{s_2}} \dots x_{s_l}^{b_{s_l}}
\end{align*}
\begin{proposition}
    Polynomial $\hat{f}_1$ is quasihomogeneous with the reduced system of weights $(2q_1, q_{2}, \dots, q_N, 1/2 - q_1)$, where $(q_1, \dots, q_N)$ are the weights of $f_0$.
\end{proposition}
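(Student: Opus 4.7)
The statement is a direct quasihomogeneity check, so the plan is to inspect each type of monomial in the explicit form of $\hat{f}_1$ given by (\ref{exform}) and the subsequent rewriting of $f_{add}$, and verify that every monomial has total weight $1$ with respect to the claimed assignment
\begin{equation*}
\mathrm{wt}(y) = 2q_1, \qquad \mathrm{wt}(x_m) = q_m \ \text{for } m = 2,\dots,N, \qquad \mathrm{wt}(z) = \tfrac{1}{2} - q_1.
\end{equation*}
Recall that on the chart $U_1$ the invariants satisfy $x_1^2 = y$ and $x_{N+1}^2 = yz^2$, so a $G$-invariant monomial $x_1^{2b_1} M$ (with $M$ not involving $x_1, x_{N+1}$) pulls back to $y^{b_1} M$, and $x_{N+1}^2$ pulls back to $yz^2$. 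Since $x_1$ is a leaf of $\Gamma_{\kappa_0}$ and $x_{N+1}$ is isolated, the only monomial of $f_{\kappa_0}$ involving $x_1$ is $x_1^{2a_1} x_2$, so the rewriting (\ref{exform}) is well-defined and no fractional powers of $y$ appear.

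First I would treat $(\hat{f}_\kappa)_1$. The monomial $x_1^{2a_1} x_2$ pulls back to $y^{a_1} x_2$, whose weight equals $2a_1 q_1 + q_2$; this is the original weight of $x_1^{2a_1} x_2$ in $f$, namely $1$. Each remaining monomial of $f_{\kappa_0}$ involves only variables $x_m$ with $m \in \{2,\dots,N\}$, and its weight is unchanged under the chart map. Finally, $yz^2$ has weight $2q_1 + 2(\tfrac{1}{2} - q_1) = 1$, matching the original weight of $x_{N+1}^2$. Thus every summand of $(\hat{f}_\kappa)_1$ has weight $1$.

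Next I would treat $(\hat{f}_{add})_1$ using the splitting of $f_{add}$ into two sums (the one indexed by $\{J_r \in A_R \mid 1 \notin J_r\}$ and the one indexed by $\{J_r \in A_R \mid 1 \in J_r\}$) displayed in the text. The monomials of the first sum do not involve $x_1$ and hence pull back to themselves, retaining weight $1$. A monomial of the second sum has the $G$-invariant shape $x_1^{2b_1} x_{s_2}^{b_{s_2}} \cdots x_{s_l}^{b_{s_l}}$ and pulls back to $y^{b_1} x_{s_2}^{b_{s_2}} \cdots x_{s_l}^{b_{s_l}}$; its new weight is
\begin{equation*}
b_1 \cdot 2q_1 + \sum_{i=2}^l b_{s_i} q_{s_i} = 2b_1 q_1 + \sum_{i=2}^l b_{s_i} q_{s_i},
\end{equation*}
which is exactly the old weight of the original monomial, i.e.\ $1$. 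Combining the two cases shows that $(\hat{f}_{add})_1$ is quasihomogeneous of total weight $1$, hence $\hat{f}_1 = (\hat{f}_\kappa)_1 + (\hat{f}_{add})_1$ is quasihomogeneous with reduced weight system $(2q_1, q_2, \dots, q_N, \tfrac{1}{2} - q_1)$.

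There is essentially no obstacle here: the argument reduces to bookkeeping the effect of the substitutions $x_1^2 \mapsto y$, $x_{N+1}^2 \mapsto yz^2$ on monomials that were already weight-$1$ for the system $(q_1, \dots, q_N, \tfrac{1}{2})$. The only place where one has to be slightly careful is to invoke the $G$-invariance of $f_{add}$ to ensure $x_1$ appears only to even powers, so that the pullback to $U_1$ is a genuine polynomial in $(x_2,\dots,x_N,y,z)$ rather than something involving $\sqrt{y}$; this is exactly what is used to justify the rewriting of $f_{add}$ recorded before the statement.
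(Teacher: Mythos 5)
Your proof is correct and follows essentially the same route as the paper: a monomial-by-monomial weight check of $(\hat f_\kappa)_1$ and $(\hat f_{add})_1$ under the substitutions $x_1^2\mapsto y$, $x_{N+1}^2\mapsto yz^2$ (the paper phrases it as solving the equations $\hat a_s\hat q_s+\hat q_{\kappa(s)}=1$ and $\sum_s \hat b_s\hat q_s=1$ for the new weights, which is the same computation). Your remark on $G$-invariance forcing even powers of $x_1$ matches the justification the paper records just before the proposition.
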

\begin{proof}
The reduced weights are defined by the equations $a_iq_i + q_{\kappa(i)} = 1$ where $a_i$ are the fixed powers of the polynomial and $q_{\kappa(i)} = 0$ iff $\kappa(i) = i$ (in particular $q_{N+1} = 1/2$). Recall that $f$ has a set of powers $(2a_1, a_2, \dots, a_N, 2)$ and let $(\hat{a}_1, \hat{a}_2, \dots, \hat{a}_{N+1})$ be the new powers after a resolution and a restriction on the first chart. It is easy to see that if $s \neq 1,N+1$, then $a_s = \hat{a}_s$. Therefore, the equations $\hat{a}_sq_s + q_{\kappa(s)} = 1$ hold and it means that $\hat{q}_s = q_s$. Now if $s=1$, we have $\hat{a}_1 = 2a_1/2 = a_1$ and in this case the equation $\hat{a}_{1}(2q_1) + \hat{q}_{2} = a_1(2q_1) + q_{2} = 2a_1q_1 + q_{2} = 1$ holds from which we conclude that $\hat{q}_1 = 2q_1$. Similarly for $s = N+ 1$ we know that $\hat{a}_{N+1} = 2$ due to explicit form of $\hat{f}_{1}$, and we just have to solve the equation \begin{align*}
    \hat{a}_{N+1}\hat{q}_{N+1} + \hat{q}_{1} = 1 \\
    2\hat{q}_{N+1} + 2q_1 = 1 \\
    \hat{q}_{N+1} = \frac{1-2q_1}{2}
\end{align*} 
The condition $\sum_{s \in J_k} b_{s}q_{s} = 1$ holds since for each choice of $J_k$ we also have the changes only for $s=1$ and $\hat{b}_{k_1} = 2b_{k_1}/2= b_{k_1}$ which implies $2b_{k_1}q_1 = \hat{b}_{k_1}2q_1 = \hat{b}_{k_1}\hat{q}_1$.
\end{proof}
\begin{corollary}
    Since $\hat{f}_{1}$ is quasihomogeneous, it should be corresponded to some graph. By the explicit form (\ref{exform}), we obtain that it is the graph $\Gamma_{\bar{\kappa}} = \circ_{\Gamma_\kappa}(N+1, 1)$.
\end{corollary}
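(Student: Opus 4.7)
The plan is to read the graph of $\hat{f}_1$ directly off the explicit form (\ref{exform}) of $(\hat{f}_\kappa)_1 = f_{\kappa_0}(y, x_2, \ldots, x_N) + z^2 y$, and then match the result against the definition of the gluing operation $\circ_{\Gamma_\kappa}$. The graph of a loop-with-branches polynomial is encoded by a single $\bar{\kappa}$-type monomial $x_j^{\hat{a}_j} x_{\bar{\kappa}(j)}$ at each vertex $j$, so the task reduces to listing these monomials in $\hat{f}_1$ and reading off the arrows. The summand $f_{\kappa_0}(y, x_2, \ldots, x_N)$ has exactly the same monomial types as $f_{\kappa_0}$ (with $x_1$ now played by $y$), so for every $j \in \{1,\ldots,N\}$ it provides a $\bar{\kappa}$-type monomial with $\bar{\kappa}(j) = \kappa(j) = \kappa_0(j)$, thereby reproducing the subgraph $\Gamma_{\kappa_0}$ inside $\Gamma_{\bar{\kappa}}$; in particular, the leaf $1$ retains its outgoing arrow to $2$.

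The remaining monomial $z^2 y = z^{\hat{a}_{N+1}} \cdot y$ with $\hat{a}_{N+1}=2$ is itself a $\bar{\kappa}$-type monomial, with source vertex $N+1$ (the variable $z$) and $\bar{\kappa}(N+1) = 1$ (the variable $y$). Thus it contributes precisely the vertex $N+1$ together with a single outgoing arrow $N+1 \to 1$. Combining this with the previous observation, the $\bar{\kappa}$-part of $\hat{f}_1$ yields the graph $\Gamma_{\kappa_0}$ augmented by the new vertex $N+1$ and the arrow $N+1 \to 1$.

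By construction $\Gamma_\kappa = \Gamma_{\kappa_0} \sqcup \{N+1\}$ with $N+1$ isolated (arising from the term $x_{N+1}^2$ in $f_\kappa$), and the vertex $1 \in T_{\kappa_0} \subseteq T_\kappa$. Hence, by definition, $\circ_{\Gamma_\kappa}(N+1,1)$ attaches an arrow from the isolated $N+1$ to the leaf $1$, producing exactly the graph assembled above. The leftover terms $(\hat{f}_{add})_1$ are multivariable monomials inherited from the admissible collection $A_R$ (with $|J_r|\geq 2$), hence not of $\bar{\kappa}$-type, and they are absorbed into $\bar{f}_{add}$ without affecting the graph. The one substantive observation is that the crepant resolution trades the self-contribution $x_{N+1}^2$ (which keeps $N+1$ isolated in $\Gamma_\kappa$) for the edge-type monomial $z^2 y$ (which gives $N+1$ an outgoing arrow to $1$ in $\Gamma_{\bar{\kappa}}$); once this is noted, the identification $\Gamma_{\bar{\kappa}} = \circ_{\Gamma_\kappa}(N+1,1)$ is immediate.
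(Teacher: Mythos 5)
Your proof is correct and takes essentially the same route as the paper, which simply asserts the corollary from the explicit form $(\hat{f}_\kappa)_1 = f_{\kappa_0}(y,x_2,\dots,x_N) + z^2y$: the first summand reproduces $\Gamma_{\kappa_0}$ and the monomial $z^2y$ contributes the single new arrow $N+1 \to 1$, which is exactly the gluing $\circ_{\Gamma_\kappa}(N+1,1)$. Your additional remarks — that the $(\hat f_{add})_1$ terms are not of $\kappa$-type and hence do not affect the graph, and that the resolution trades the self-term $x_{N+1}^2$ for the edge-type monomial $z^2y$ — are correct elaborations of what the paper leaves implicit.
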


Since we proved that $\hat{f}_{1}$ is quasihomogeneous, the last statement that we need is the following:
\begin{proposition}
     All critical points of $\hat{f}$ are on the chart $U_{1}$. Moreover, $\hat{f}_{1}$ is non-degenerate.
\end{proposition}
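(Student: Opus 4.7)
The plan is to verify the two halves separately: that $\hat{f}_2$ has no critical points at all (so every singularity of $\hat{f}$ lies on $U_1$), and that $\hat{f}_1$ has a unique isolated critical point at the origin. The central tool in both cases is the $G$-invariance of $f$: it produces structural identities on each chart and, via Proposition~4 applied to the $\ZZ/2\ZZ$-action, makes $f^g = f|_{\Fix(g)}$ non-degenerate.

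For $\hat{f}_2$, the substitution $x_1^2 = y'^2 z'$, $x_{N+1}^2 = z'$ sends every $x_1$-containing monomial of $f$ into the form $y'^{2k} z'^k \cdot (\text{stuff without } y', z')$, while the only other $z'$-contribution is the monomial $z'$ itself coming from $x_{N+1}^2$. Consequently the operator $2z' \partial_{z'} - y' \partial_{y'}$ annihilates every monomial of $\hat{f}_2$ except this $z'$, yielding the identity $2z' \partial_{z'} \hat{f}_2 - y' \partial_{y'} \hat{f}_2 = 2z'$; in particular any critical point on $U_2$ lies in $\{z' = 0\}$. The restriction $\hat{f}_2|_{z'=0}$ discards every $x_1$- or $x_{N+1}$-containing monomial (each carries a positive $z'$-exponent, since $a_1 \geq 2$ and $b_1 \geq 1$), leaving precisely $f^g(x_2, \dots, x_N)$, which is independent of $y'$. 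Proposition~4 forces $x_2 = \dots = x_N = 0$ at any critical point, after which $\partial_{z'} \hat{f}_2$ evaluates to $1$ plus terms each carrying a factor $x_{s_j}^{b_{s_j}}$ with $b_{s_j} > 0$; these vanish, and we are left with $1 \neq 0$, a contradiction.

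For $\hat{f}_1$, the substitution $x_1^2 = y$, $x_{N+1}^2 = y z^2$ yields $\hat{f}_1 = \tilde{f}_0(y, x_2, \dots, x_N) + y z^2$, where $\tilde{f}_0$ is obtained from $f_0$ by replacing $x_1^2$ with $y$ (so that $\tilde{f}_0(x_1^2, x_2, \dots) = f_0(x_1, x_2, \dots)$). The equation $\partial_z \hat{f}_1 = 2yz = 0$ splits into two cases. If $z = 0$, the remaining conditions become $\nabla \tilde{f}_0 = 0$, and the chain-rule identity $\partial_{x_1} f_0 = 2x_1 (\partial_y \tilde{f}_0)|_{y = x_1^2}$ shows that any critical point $(y_0, x_{s,0})$ of $\tilde{f}_0$ pulls back to a critical point $(\pm\sqrt{y_0}, x_{s,0})$ of $f_0$, so non-degeneracy of $f_0$ forces $y_0 = 0$ and $x_{s,0} = 0$. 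If $y = 0$, the partials $\partial_{x_s} \hat{f}_1|_{y=0}$ reduce to $\partial_{x_s} f^g$ by the same monomial analysis as above, giving $x_s = 0$, after which $\partial_y \hat{f}_1|_{y = 0,\, x_s = 0} = z^2$ forces $z = 0$. In either case the only critical point is the origin, so $\hat{f}_1$ is non-degenerate.

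The hardest step is recognising that the restrictions $\hat{f}_2|_{z'=0}$ and $\tilde{f}_0|_{y=0}$ both coincide with $f^g$: this is what lets Proposition~4 do the heavy lifting, instead of forcing a direct non-degeneracy argument on a smaller loop-with-branches graph (where one would otherwise have to show that the admissible-collection construction of Theorem~\ref{c} descends to the subgraph obtained by deleting the leaf $1$). Once this identification is in place the proof runs cleanly via the two weight-counting identities above, with the surviving constant terms ($1$ in $\partial_{z'}\hat{f}_2$ and $z^2$ in $\partial_y\hat{f}_1|_{y=0}$) closing off the remaining directions.
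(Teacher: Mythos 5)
Your proof is correct, and it takes a genuinely different (and in places more careful) route than the paper's. On $U_2$ the paper looks only at the two partials $\partial_{x_1}\hat f_2$ and $\partial_{x_{N+1}}\hat f_2$, argues that neither $x_1$ nor $x_{N+1}$ can vanish at a critical point because of the constant term, and then divides by monomials to reach the contradiction $x_1^{-2a_1}x_{N+1}^{1-a_1}=0$; your Euler-type identity $2z'\partial_{z'}\hat f_2-y'\partial_{y'}\hat f_2=2z'$ instead forces $z'=0$ outright, after which the identification $\hat f_2|_{z'=0}=f^g$ and the non-degeneracy of $f^g$ (the paper's Proposition from [ET1]) finish the job. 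This buys robustness: the paper's step ``the LHS does not vanish if $x_1=0$ or $x_{N+1}=0$'' silently needs the exponents $a_1-1$ and $b_{k_1}-1$ to be positive, whereas your route never needs that. On $U_1$ the paper only checks that $0$ is a critical point and then appeals to the change of variables being a ``diffeomorphism'' together with non-degeneracy of $f$ --- an argument that is essentially correct off the exceptional locus $\{y=0\}$ but is not spelled out there; your case split on $\partial_z\hat f_1=2yz=0$, pulling back critical points of $\tilde f_0$ to critical points of $f_0$ via $y=x_1^2$ when $z=0$, and reducing to $f^g$ when $y=0$, fills exactly that gap. (Both arguments implicitly use that for a quasihomogeneous polynomial with positive weights an isolated singularity at the origin forces the critical locus to be $\{0\}$; this is standard and fine.) The only nitpick is the parenthetical ``$a_1\geq 2$'' in your $U_2$ analysis, where $a_1\geq 1$ is all that is needed and all that is guaranteed; it does not affect the argument.
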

\begin{proof}
Our aim is to prove that $0$ is an isolated solution of the system $\{d \hat{f}_{s} = 0\}$ only if $s=1$. Recall the explicit form of $\hat{f}_{1}$ and put $y = x_1$ and $z=x_{N+1}$:
\begin{align*}
  \hat{f}_{1} = (\hat{f}_\kappa)_{1} + (\hat{f}_{add})_{1} = f_{\kappa_0}(x_1,\dots,x_N) + x_{N+1}^2x_1 +& \sum_{\{J_r \in A_R\;|\;
1 \notin J_r\}} \varepsilon_{J_r}x_{s_1}^{b_{s_1}}x_{s_2}^{b_{s_2}} \dots x_{s_l}^{b_{s_l}} +\\ +& \sum_{\{J_r \in A_R\;|\;
1 \in J_r\}} \varepsilon_{J_r}x_{1}^{b_1}x_{s_2}^{b_{s_2}} \dots x_{s_l}^{b_{s_l}}
\end{align*}
It is not hard to note that if $s$ such that $1 < s < N+1$ and $s$ does not lie in any set from $A_R$, then
\[\frac{\partial \hat{f}_{1}}{\partial x_s} =  \frac{\partial f_{\kappa_0}}{\partial x_s} \]
So in this case $0$ is a solution of equations above since $f$ is non-degenerate.
If $s$ such that $1 < s < N+1$ and $s\in J_k$ for some $J_k \in A_R$, then the system of equations
\[\frac{\partial \hat{f}_1}{\partial x_s} = a_sx_s^{a_s-1}t_{\kappa(s)} + \sum_{w \in \kappa^{-1}(s)}x_w^{a_w} + \sum_{J_k \in A_R \wedge s \in J_k} \varepsilon_{J_k}b_{s}x_{s}^{b_{s}-1}x_{k_2}^{b_{k_2}} \dots x_{k_l}^{b_{k_l}} \]
obviously vanishes at $0$. And for $s = 1,N+1$ we also have $0$ as a solution:
\[
\frac{\partial \hat{f}_{1}}{\partial x_1} = a_1x_1^{a_1-1}x_{2} + x_{N+1}^2 + \sum_{\{J_r \in A_R\;|\;
1 \in J_r\}} \varepsilon_{J_r}2b_1x_{1}^{2b_1-1}x_{s_2}^{b_{s_2}} \dots x_{s_l}^{b_{s_l}} = 0
\]
\[
\frac{\partial \hat{f}_1}{\partial x_{N+1}} = x_1x_{N+1} = 0
\]

We have shown that $0$ is a solution of the equations above. Also note that the change of variables $\{t_1 = x_1, t_{N+1}^2 = x_1x_{N+1}^2, t_1t_{N+1}= x_1x_{N+1}\}$ is a diffeomorphism and since $f$ is non-degenerate then  $\hat{f}_1$ is also non-degenerate.

Let us turn to $\hat{f}_2$, rewrite it similarly to $\hat{f}_1$ and consider two equations:
\[
\frac{\partial \hat{f}_2}{\partial x_1} = 2a_1x_1^{2a_1-1}x_{N+1}^{a_{N+1}}x_{2} + \sum_{J_k \in A_R \wedge 1 \in J_k} \varepsilon_{J_k}2b_{k_1}x_1^{2b_{k_1}-1}x_{N+1}^{b_{k_1}}x_{k_2}^{b_{k_2}} \dots x_{k_l}^{b_{k_l}} = 0
\]
\[
\frac{\partial \hat{f}_2}{\partial x_{N+1}} = a_ix_1^{2a_1}x_{N+1}^{a_1-1}x_{2} + \sum_{J_k \in A_R \wedge 1 \in J_k} \varepsilon_{J_k}b_{k_1}x_1^{2b_{k_1}}x_{N+1}^{b_{k_1}-1}x_{k_2}^{b_{k_2}} \dots x_{k_l}^{b_{k_l}} + 1 = 0
\]
Obviously this equations does not have the solution at the origin. Moreover, LHS of the second equation does not vanish if $x_1=0$ or $x_{N+1}=0$. It means we can suppose that $x_1\neq 0$ and $x_{N+1} \neq 0$. Then we divide the first equation on $2x_1^{2a_1-1}x_{N+1}^{a_1}$ and the second one on $x_1^{2a_1}x_{N+1}^{a_1-1}$, and obtain the following:
\[
a_1x_{2} + \sum_{J_k \in A_R \wedge 1 \in J_k} \varepsilon_{J_k}b_{k_1}x_1^{2b_{k_1}-2a_1}x_{N+1}^{b_{k_1}-a_1}x_{k_2}^{b_{k_2}} \dots x_{k_l}^{b_{k_l}} = 0
\]
\[
a_1x_{2} + \sum_{J_k \in A_R \wedge 1 \in J_k} \varepsilon_{J_k} b_{k_1}x_1^{2b_{k_1}-2a_1}x_{N+1}^{b_{k_1}-a_1}x_{k_2}^{b_{k_2}} \dots x_{k_l}^{b_{k_l}} + x_1^{-2a_1}x_{N+1}^{1-a_1} = 0
\]
which implies $x_1^{-2a_1}x_{N+1}^{1-a_1} = 0$. Thus, the system $\{d\hat{f}_2 = 0\}$ does not have any solution what completes the proof.
\end{proof}
\begin{remark}
    Note that since the orbifold equivalence is an equivalence relation, we can expand our result on the case of $k$ isolated points and $G = \underbrace{\ZZ/2\ZZ\times \dots \times \ZZ/2\ZZ}_{k}$ consistently applying Theorem \ref{main}.
\end{remark}
\section{Isomorphism of algebras}
From the equivalence of categories we have an isomorphism of the corresponding Frobenius algebras. In this section we give this isomorhism explicitly.
 We associate with Landau-Ginzburg orbifold $(f,G)$ Hochschild cohomology of the category of $G$-equivariant matrix factorizations $\ccHH^*(MF_G(f))$. To work with this ring we construct algebra $\Jac(f,G)$ such that $\Jac(f,G) \cong \ccHH^*(MF_G(f)$ (see \cite{Sh} \cite{BTW1}, \cite{BT1} for details). To define $\Jac(f,G)$ let us introduce a vector space: 
\[ \Jac'(f,G)= \bigoplus_{g \in G} \Jac(f^g)\xi_g\]
where $\xi_g$ are formal generators associated with $g \in G$. We define multiplication in this vector space as follows (see \cite{Sh} for details).
\begin{definition} Let $\theta_1, \theta_2, \dots, \theta_N$ и $\partial_{\theta_1}, \partial_{\theta_2}, \dots, \partial_{\theta_N}$ be formal variables. Then the \emph{Clifford algebra $Cl_N$} is a factor-algebra of $\mathbb{C}[\theta_1, \theta_2, \dots, \theta_N, \partial_{\theta_1}, \partial_{\theta_2}, \dots, \partial_{\theta_N}]$ by the following relations:
\begin{align*}
    \theta_i\theta_j &= -\theta_j\theta_i \\  \partial_{\theta_i}\partial_{\theta_j} &= -\partial_{\theta_j}\partial_{\theta_i}\\
    \partial_{\theta_i}\theta_j &= \delta_{ij} - \theta_j\partial_{\theta_i}
\end{align*}
\end{definition}
For $I\subseteq\{1,\ldots,n\}$ we have the following notations:
\begin{align*}
\partial_{\theta_{I}}:=\prod_{i\in I}\partial_{\theta_i}, 
\quad {\theta_{I}}:=\prod_{i\in I}{\theta_i},
\end{align*}
where indices are written in a increasing order. We also introduce the following notations for the $Cl_N$-modules:  
\[
\CC[{\theta}] := Cl_N/_{Cl_N\langle_{\partial_{\theta_1}},\ldots,{\partial_{\theta_N}}\rangle}, \quad \CC[\partial_{\theta}] := Cl/_{Cl_N\langle{{\theta_1}},\ldots,{{\theta_N}}\rangle} 
\]  

\begin{definition} The map $\bigtriangledown_i^{x\to(x,y)}\colon\mathbb{C}[\textbf{x}]\to\mathbb{C}[\textbf{x},\textbf{y}]$ 
\begin{center}
$\bigtriangledown_i^{x\to (x,y)}(f(x)) = \frac{f(y_1, y_2, \dots, y_{i-1}, x_{i}, x_{i+1}, \dots, x_N) - f(y_1, y_2, \dots, y_{i}, x_{i+1}, x_{i+2}, \dots, x_N)}{x_i-y_i}$
\end{center}
is called $i$-th \emph{difference derivative} of the polynomial $f(x)$.
\end{definition}

Now we describe the structure constants of multiplication in $\Jac'(f,G)$. For $g \in G$ denote $I_g := \{i | g_i = 1 \}, \; I_g^c := \{1,\dots, N\} \; \backslash I_g$ and $d_g := |I_g^c|$. For each pair $(g,h) \in G\times G$ we define $d_{g,h}:=\frac{1}{2}(d_g +d_h - d_{gh})$, and we define $\sigma_{g,h}\in \Jac(f^{gh})$ as follows:
\begin{itemize}
\item
If $d_{g,h} \notin \ZZ_{\geq 0}$, then set $\sigma_{g,h} = 0$.
\item
If $d_{g,h} \in \ZZ_{\geq 0}$, then we define $\sigma_{g,h}$ as the coefficient before $\partial_{\theta_{I_{gh}^c}}$ in the expression expansion:
\begin{align*}
\frac{1}{d_{g, h} !} \Upsilon\left(\left(\left\lfloor\mathrm{H}_{f}(x, g(x), x)\right\rfloor_{g h}+\left\lfloor\mathrm{H}_{f, g}(x)\right\rfloor_{g h} \otimes 1+1 \otimes\left\lfloor\mathrm{H}_{f, h}(g(x))\right\rfloor_{g h}\right)^{d_{g, h}} \otimes \partial_{\theta_{I_{g}^c}} \otimes \partial_{\theta_{I_{h}^c}}\right)    
\end{align*}
where 
\begin{itemize}
\item[(1)] $\mathrm{H}_{f}(x, g(x), x)$ is an element of $\CC[\bx]\otimes\CC[\theta]^{\otimes2}$ and defined by
\begin{align*}
\rmH_{f}(\bx,\by,\bz):=\sum_{1\leq j\leq i\leq n} \nabla^{\by\to(\by,\bz)}_j\nabla^{\bx\to(\bx,\by)}_i(f)\,\theta_i\otimes \theta_j;
\end{align*}
\begin{align*}
\mathrm{H}_{f}(x, g(x), x) = \rmH_{f}(\bx,\by,\bz)|_{\{\by=g(\bx),\, \bz=\bx\}}
\end{align*}
\item[(2)]
$\rmH_{{f,g}}(\bx)$ is an element of module $\CC[\bx]\otimes \CC[\theta]$ given by the expression
\begin{eqnarray*}
\rmH_{{f,g}}(\bx):=\sum_{{i,j\in I_{g}^c,\,\,  j<i}}\frac{1}{1-g_j}\nabla^{\bx\to(\bx,\bx^g)}_j\nabla^{\bx\to(\bx,g(\bx))}_i(f)\,\theta_j\,\theta_i,
\end{eqnarray*}
where $\bx^g$ is defined by $(\bx^g)_i=x_i$ if $i\in I_g$ and $(\bx^g)_i=0$ if $i\in I_g^c$;
\item[(3)]
$\left\lfloor\rm- \right\rfloor_{gh}:\CC[\bx]\otimes V\longrightarrow \Jac(f^{gh})\otimes V$ for 
$V=\CC[\bx]\otimes \CC[\theta]^{\otimes 2}$ or $V=\CC[\bx]\otimes \CC[\theta]$
is a $\CC$-linear extension of the map  $\CC[\bx]\longrightarrow \Jac(f^{gh})$;
\item[(4)]
The degree $d_{g,h}$ is calculated with respect to the natural multiplication defined by $\CC[\bx]\otimes \CC[\theta] \otimes \CC[\theta]$;
\item[(5)]
$\Upsilon$ is a $\CC[\bx]$-linear extension of the map $\CC[{\theta}]^{\otimes2}\otimes \CC[\partial_{\theta}]^{\otimes2}\to\CC[\partial_{\theta}]$, defined as
\begin{align*}
 p_1(\theta)\otimes p_2(\theta)\otimes q_1(\partial_\theta)\otimes q_2(\partial_\theta)\mapsto(-1)^{|q_1||p_2|}p_1(q_1)\cdot p_2(q_2)
\end{align*}
where $p_i(q_i)$ is the action of $p_i(\theta)$ on $q_i(\partial_\theta)$ according to the multiplication structure in Clifford algebra defined above. 
\end{itemize}
\end{itemize}

Then we define the multiplication as follows:
\begin{align*}\label{eq: HH cup}
[\phi(\bx)]\xi_g\cup [\psi(\bx)]\xi_{h} 
=[\phi(\bx)\psi(\bx) \sigma_{g,h}]\xi_{gh},\quad \phi(\bx), \psi(\bx)\in \CC[\bx].
\end{align*}
We also endowing $\Jac'(f,G)$ with an action as follows. For $h \in G$ and $[\phi(x)] \in \Jac(f^g)$
\begin{align*}
 h=(h_1, \ldots, h_n): \quad [\phi(x)] \xi_g\mapsto \prod_{i\in I_g^c} h^{-1}_i\,\cdot  [\phi(h(x))]\xi_g, \;\;\; [\phi(x)] \in \Jac(f^g)   
\end{align*}
Then we define $\Jac(f,G)=(\Jac'(f,G))^G$. In particular, in Theorem \ref{main} we have 
\begin{align*}
    \Jac(\bar{f}, \{id\}) \cong \Jac(\bar{f}) \;\;\; \Jac(f,G) \cong (\Jac(f) \xi_{id})^G \oplus \Jac(f^g)\xi_g
\end{align*}
\begin{proposition}\label{HHJ}
    There is an isomorphism of algebras $\psi\colon\Jac(\bar{f})\DistTo\Jac(f,G)$. In particular,
    \begin{center}
$\psi([x_i]) = \begin{cases} [x_i]\xi_{id}$, if $i\neq 1, N+1\\
 [x_1^2]\xi_{id}$, if $i = 1\\
\xi_g$, if $i = N+1
\end{cases}$
\end{center}
\end{proposition}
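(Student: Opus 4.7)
The plan is to verify the isomorphism in three steps: (i) establish that the formulas on generators give a well-defined algebra homomorphism $\psi : \Jac(\bar{f}) \to \Jac(f, G)$; (ii) observe that $\psi$ is surjective onto each sector of $\Jac(f, G)$; (iii) conclude bijectivity using the dimension equality provided by Theorem \ref{main}.

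For (i), the first task is to compute the partial derivatives of $\bar{f}$, which I plan to do using the crepant-resolution substitution $x_1 = \bar{x}_1^{1/2}$, $x_{N+1} = \bar{x}_1^{1/2}\bar{x}_{N+1}$ of Section \ref{CR and OE}. Since $f_{add}$ is independent of $x_{N+1}$, so $\partial f/\partial x_{N+1} = 2 x_{N+1}$, the chain rule yields
\begin{align*}
\tfrac{\partial \bar{f}}{\partial \bar{x}_i} &= \tfrac{\partial f}{\partial x_i}\big|_{x_1^2 \to \bar{x}_1} \quad (i \neq 1, N+1),\\
\tfrac{\partial \bar{f}}{\partial \bar{x}_{N+1}} &= 2\bar{x}_1\bar{x}_{N+1},\\
\tfrac{\partial \bar{f}}{\partial \bar{x}_1} &= \tfrac{1}{2 x_1}\tfrac{\partial f}{\partial x_1}\big|_{x_1^2 \to \bar{x}_1} + \bar{x}_{N+1}^2,
\end{align*}
with the coefficient $(1/(2x_1))\partial f/\partial x_1$ being polynomial in $x_1^2$ thanks to the evenness of $f$ in $x_1$. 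Applying $\psi$ to the first two derivatives should give $(\partial f/\partial x_i)\xi_{id}$, which lies in the Jacobian ideal of $f$, and $2 x_1^2 \xi_g$, which vanishes in $\Jac(f^g)\xi_g$ because $x_1$ acts by zero on $\Fix(g)$.

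The main obstacle is to verify $\psi(\partial \bar{f}/\partial \bar{x}_1) = 0$, which reduces to the identity
\[
\sigma_{g,g} \equiv -\frac{1}{2x_1}\frac{\partial f}{\partial x_1} \pmod{\text{Jacobian ideal of } f},
\]
where $\sigma_{g,g}$ is the Shklyarov structure constant for $\xi_g \cup \xi_g$. With $I_g^c = \{1, N+1\}$ and $d_{g,g} = 2$, and the Hessian of $f$ block-diagonal in the variables $(x_1, x_{N+1})$, I expect the dominant contribution to $\sigma_{g,g}$ to come from the $\rmH_{f,g}$-summand of the Shklyarov formula. Unfolding the difference derivatives $\nabla_1^{x \to (x, x^g)}\nabla_{N+1}^{x \to (x, g(x))}(f)$, rescaling by $1/(1 - g_1) = 1/2$, and bookkeeping Koszul signs from $\Upsilon$ should yield the desired identity modulo the Jacobian ideal. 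Once this is in hand, multiplicativity of $\psi$ on generators is automatic, since the only new relation beyond commutativity is $\xi_g^2 = \sigma_{g,g}\xi_{id}$.

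For step (ii), the image of $\psi$ will span $\Jac(f, G)$: monomials $\bar{x}_1^{\alpha_1}\cdots\bar{x}_N^{\alpha_N}$ map to $x_1^{2\alpha_1}x_2^{\alpha_2}\cdots x_N^{\alpha_N}\xi_{id}$, covering the $G$-invariant generators $x_1^2, x_2, \ldots, x_N$ of $\Jac(f)^G\xi_{id}$, while monomials containing $\bar{x}_{N+1}$ land, after collapsing $x_1 = x_{N+1} = 0$, in a spanning set for $\Jac(f^g)\xi_g$. For step (iii), Theorem \ref{main} (together with the identifications $\ccHH^*(\mathrm{MF}(\bar{f})) \cong \Jac(\bar{f})$ and $\ccHH^*(\mathrm{MF}_G(f)) \cong \Jac(f, G)$) yields an abstract isomorphism of finite-dimensional algebras, hence equality of dimensions, so a surjective algebra map between them is automatically bijective.
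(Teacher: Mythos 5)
Your overall strategy matches the paper's in its essential content: both proofs come down to (a) checking that monomials of $\Jac(\bar f)$ without $\bar x_{N+1}$ biject onto $(\Jac(f)\xi_{id})^G$ and those containing $\bar x_{N+1}$ (necessarily of degree one in it, with no $\bar x_1$, because $2\bar x_1\bar x_{N+1}$ lies in the Jacobian ideal) biject onto $\Jac(f^g)\xi_g$, and (b) verifying the single nontrivial relation, namely that $\xi_g^2$ equals the image of $[\bar x_{N+1}]^2$, which via $\partial\bar f/\partial\bar x_1=0$ is exactly your identity $\sigma_{g,g}\equiv -\tfrac{1}{2x_1}\tfrac{\partial f}{\partial x_1}$ modulo the Jacobian ideal. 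Your packaging (homomorphism on generators, surjectivity, then bijectivity by the dimension equality from Theorem \ref{main}) differs mildly from the paper's explicit vector-space decomposition $\Jac(\bar f)=\mathcal B_1\oplus\mathcal B_2$, but both are legitimate and rest on the same computation.

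There is, however, one concrete misstep in your plan for that computation. You propose to extract $\sigma_{g,g}$ from the $\rmH_{f,g}$-summand of Shklyarov's formula. That term is identically zero here: $\rmH_{f,g}(\bx)$ is built from the mixed difference derivative $\nabla_1\nabla_{N+1}(f)$, and $f$ contains no monomial involving both $x_1$ and $x_{N+1}$ (the vertex $N+1$ is isolated in $\Gamma_\kappa$ and $f_{add}$ does not depend on $x_{N+1}$), so $\rmH_{f,g}(\bx)=\rmH_{f,g^{-1}}(g(\bx))=0$. The actual contribution to $\sigma_{g,g}$ comes from $\tfrac{1}{2}\Upsilon\bigl(\lfloor\rmH_f(x,g(x),x)\rfloor^2\otimes\partial_{\theta_1}\partial_{\theta_{N+1}}\otimes\partial_{\theta_1}\partial_{\theta_{N+1}}\bigr)$: the cross term $2A_{11}A_{N+1,N+1}\,\theta_1\theta_{N+1}\otimes\theta_1\theta_{N+1}$ of the square of the "diagonal" part of $\rmH_f$, where $A_{N+1,N+1}=1$ and $A_{11}=a_1x_1^{2(a_1-1)}x_2+\sum\varepsilon_{J_k}b_{k_1}x_1^{2(b_{k_1}-1)}x_{k_2}^{b_{k_2}}\cdots x_{k_l}^{b_{k_l}}$, yielding $\sigma_{g,g}=-[A_{11}A_{N+1,N+1}]$, which is indeed $-\tfrac{1}{2x_1}\tfrac{\partial f}{\partial x_1}$ as you predicted. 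So your target identity is right, but the term you planned to compute would come out zero; redirect the computation to the square of $\rmH_f$ and the argument closes.
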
\textit{}
\begin{proof}
 Let us show that $\Jac(\bar{f})$ has a decomposition into sum of 2 vector spaces:
\begin{align*}
    \Jac(\bar{f}) = \mathcal{B}_1\oplus\mathcal{B}_2
\end{align*}
such that $\mathcal{B}_1 \cong (\Jac(f))^G$ as algebras and $\mathcal{B}_2 \cong \Jac(f^g)x_{N+1}$ as vector spaces. 
We construct basis of $\mathcal{B}_1$ as follows. Let $[x_1^{\alpha_1}x_2^{\alpha_2}\dots x_{N+1}^{\alpha_{N+1}}]$ be a basis element of $\Jac(\bar{f})$. Note that if $\alpha_1 \neq 0$ and $\alpha_{N+1} = 0$ (i.e. this element does not depend on variable $x_{N+1}$), then $[x_1^{2\alpha_1}x_2^{\alpha_2}\dots x_{N}^{\alpha_{N}}]$ could be taken as a basis element of $\Jac(f)$ by construction of $\bar{f}$. Moreover, the multiplication of such elements $[x_1^{2\alpha_1}x_2^{\alpha_2}\dots x_{N}^{\alpha_{N}}]$ in $\Jac(\bar{f})$ is coincide with multiplication in $(\Jac(f)\xi_{id})^G$ due to the invariance under $G$-action, thus we obtain the basis of algebra $(\Jac(f)\xi_{id})^G$. 

To construct basis of $\mathcal{B}_2$, we consider the basis elements of $\Jac(\bar{f})$ such that $\alpha_{N+1} \neq 0$. Then $\alpha_{N+1}$ should be  equal to $1$ and $\alpha_1=0$ by the relation $[\frac{\partial \bar{f}}{\partial x_{N+1}}] = [2x_1x_{N+1}] = [0]$. Thus we have $[x_1^{\alpha_1}x_2^{\alpha_2}\dots x_{N+1}^{\alpha_{N+1}}] =[x_2^{\alpha_2}\dots x_{N}^{\alpha_{N}}][x_{N+1}]$ with the first factor lying in $\Jac(f^g)$, which gives us the necessary isomorphism.

The last thing that we have to prove is that $[x_{N+1}]^2 = (\xi_g)^2$. We calculate $\sigma_{g,g^{-1}}$ following the formula above. Note that
\begin{align*}
H_{f,g}(x) = \frac{1}{2}\nabla^{\bx\to(\bx,\bx^g)}_1\nabla^{\bx\to(\bx,g(\bx))}_{N+1}(f)\,\theta_1\,\theta_{N+1} = 0
\end{align*}
since $f$ does not have a summand containing both variables $x_1$ and $x_{N+1}$. Similarly $H_{f, g^{-1}}(g(x))=0$. Then $\sigma_{g,g^{-1}}$ is the coefficient of $1$ in the expression
\begin{align*}
    \frac{1}{2}\Upsilon([\rmH_f(x,g(x),x)]^2\otimes\partial_{\theta_{1}}\partial_{\theta_{1}}\otimes\partial_{\theta_{N+1}}\partial_{\theta_{N+1}})
\end{align*}
    Now let $A_{km} \in \CC[\textbf{x}]$ be the polynomials such that $\rmH_f$ is the following sum:
\begin{align*}
    \rmH_f(x,g(x),x)= \sum_{ (i,j) \neq (1,1) \; (i,j) \neq (N+1,N+1)} A_{ij}(x)\theta_i \otimes \theta_j + A_{11}(x)\theta_1 \otimes \theta_1 + A_{N+1,N+1}(x)\theta_{N+1}\otimes\theta_{N+1}
\end{align*}
Then
\[[\rmH_f]^2= 2[A_{11}A_{N+1,N+1}]\theta_1\theta_{N+1}\otimes\theta_1\theta_{N+1} + \sum_{(i,j,k,l) \neq (1,N+1,1,N+1)}[\Tilde{A}_{i,j,k,l}]\theta_i\theta_j\otimes\theta_k\theta_l\]
where we $\Tilde{A}_{i,j,k,l} \in \CC[\textbf{x}]$ are the polynomials obtaining by multiplication in $\CC[\bx]\otimes \CC[\theta] \otimes \CC[\theta]$. Consequently, we conclude that
\[\sigma_{g,g^{-1}} = -[A_{11}A_{N+1,N+1}]\] 
and by the exact calculations obtain
\[[A_{N+1,N+1}] = [1]\]
\[A_{11} = [a_1x_1^{2(a_1-1)}x_2 + \sum_{J_k \in A_R \wedge 1 \in J_k} \varepsilon_{J_k}b_{k_1}x_1^{2(b_{k_1}-1)}x_{k_2}^{b_{k_2}} \dots x_{k_l}^{b_{k_l}}]\]
which implies the following relation in $\Jac(f,G)$:
\[(\xi_g)^2 = -[a_1x_1^{2(a_1-1)}x_2 + \sum_{J_k \in A_R \wedge 1 \in J_k} \varepsilon_{J_k}b_{k_1}x_1^{2(b_{k_1}-1)}x_{k_2}^{b_{k_2}} \dots x_{k_l}^{b_{k_l}}]\xi_{id}\]
Now we consider $\Jac(\bar{f})$ and write the relation given by the partial derivative:
\[[\frac{\partial f}{\partial x_1}] = [x_{N+1}^2 + a_1x_1^{a_1-1}x_2+ \sum_{J_k \in A_R \wedge 1 \in J_k} \varepsilon_{J_k}b_{k_1}x_1^{b_{k_1}-1}x_{k_2}^{b_{k_2}} \dots x_{k_l}^{b_{k_l}}] = [0]\]
\[
[x_{N+1}]^2 = - [ a_1x_1^{a_1-1}x_2 + \sum_{J_k \in A_R \wedge 1 \in J_k} \varepsilon_{J_k}b_{k_1}x_1^{b_{k_1}-1}x_{k_2}^{b_{k_2}} \dots x_{k_l}^{b_{k_l}}]
\]
from which we obtain the claim.
\end{proof}


\end{document}